\documentclass[two side,reqno,12pt]{amsart}
\usepackage{amssymb,latexsym,xy,eucal,mathrsfs}
\usepackage{amssymb,amsmath,graphicx,colortbl,enumerate}
\usepackage{blindtext}
\usepackage{comment}
\usepackage{tikz}
\usetikzlibrary{decorations.markings}
\usepackage{pst-plot}
\usepackage{rotating}
\usepackage{pgfplots}
\usepackage{amsmath}
\usepackage{longtable,mathrsfs}
\usepackage[labelfont=bf,justification=raggedright,singlelinecheck=false]{caption}
\usepackage{caption}
\usepackage{subcaption}
\usepackage{framed}
\usepackage{float}
\numberwithin{equation}{section}
\newtheorem{lemma}{Lemma}[section]
\newtheorem{theorem}[lemma]{Theorem}

\newtheorem{corollary}[lemma]{Corollary}

\usepackage{graphicx}
\theoremstyle{definition}
\newtheorem{example}[lemma]{Example}

\newtheorem{definition}[lemma]{Definition}

\allowdisplaybreaks
\DeclareMathSizes{10}{9}{5}{5}   
\usepackage{mathtools}
\begin{document}
	\baselineskip 17truept
	\title{On solution of Fractional Diffusion Equation using Conformable Laplace Transform}
	\date{}
	\author{Somnath Sarate, Anil Khairnar and Krishnat Masalkar} 	
	
	\address{Somanth Sarate , Department~of~Engineering Science and Humanities, Marathwada~Mitra~ Mandal~College~of engineering, Karve~nagar, Pune, Pune-41004, India
		(M.S.)} \email{somnathsarate@mmcoe.edu.in}
	
	\address{Anil Khairnar, Department~of~Mathematics, Abasaheb~Garware~College~~Pune-411004~Pune, Pune-411007, India
		(M.S.)} \email{dr.khairnar@yahoo.com}
	\address{Krishnat Masalkar, Department~of~Mathematics, Abasaheb~Garware~College~~Pune-411004~Pune, Pune-411007, India
		(M.S.)} \email{krishnatmasalkar@gmail.com}
	\makeatletter
	\@namedef{subjclassname@2020}{%
		\textup{2020} Mathematics Subject Classification}
	\makeatother
	
	
	\maketitle 
	
	\maketitle 
	\begin{abstract}
The inversion theorem and convolution theorem of the conformable fractional Laplace transforms are developed. All the elementary properties of the classical Laplace transform are extended to the conformable fractional transform, and using these properties, we found analytical solutions to the initial-boundary value problems of the diffusion equation.
	\end{abstract}
	
	\noindent {Keywords: Conformable fractional derivative, Conformable fractional Laplace transform, inversion theory, convolution theory, Diffusion equations, etc.}  .\\

	\section{Introduction}
Fractional derivatives, such as Riemann–Liouville and Caputo derivatives, can be expressed via the Gamma function using fractional integrals. Although these definitions are mathematically rigorous and widely used, they do not satisfy some fundamental properties of classical calculus, such as the standard product and chain rule, which makes the analytical treatment of fractional differential equations more complex. To overcome this limitation, Khalil et al. \cite{khalil2014} introduced the conformable fractional derivative, defined through a limit process similar to the classical derivative, preserving many important properties of ordinary calculus. Later, T. Abdeljawad \cite{abdeljawad2015} extended this concept further and developed the conformable fractional Laplace transform to systematically handle differential equations involving conformable derivatives. N. A. Khan et al. \cite{khan2018some} studied several properties and applications of this transform and demonstrated its effectiveness in solving fractional differential equations. Although it reduces to the classical Laplace transform via a suitable change of variables, its theory still needs systematic development through inversion and convolution formulas for broader applicability. The main aim of this paper is to derive inversion and convolution formulae for the conformable fractional Laplace transform and apply them to solve time-fractional diffusion equations in finite and semi-infinite domains.
	\section{Elementary properties }
Let $f:(-\infty,\infty)\rightarrow \mathbb{C}$ be a function
its classical derivative at $t$ is given by
\[f'(t)=\lim_{h\rightarrow 0}\frac{f(t+h)-f(t)}{h}.\]
Its natural extension in the fractional sense ie. for $\alpha \in [0,1]$  is given by Khalil et. al. in \cite{khalil2014}, and it is defined as
\begin{align}
T_{\alpha}(f(t))=\begin{cases}
\displaystyle 	\lim_{h\rightarrow 0}\frac{f(t+h|t|^{1-\alpha})-f(t)}{h} & t>0\\
\displaystyle 	\lim_{t\rightarrow 0^+} t^{1-\alpha}f'(t) & t=0
\end{cases}
\end{align}
For $\alpha\in [n,n+1)$ for any integer $n$, we have $[\alpha]=n$ and $\{\alpha\}=\alpha-[\alpha]$. Then the conformable fractional derivative is given by 
\[T_{\alpha}(f(t))=T_{\{\alpha\}}(f^{([\alpha])}(t)),\quad\text{where}\quad f^{(k)}(t)~\text{is classical }~k^{th}~\text{derivative of }~f.\]
Note that for $t>0$ if we put $ u=h|t|^{1-\alpha}$
then 
\begin{align*}
T_{\alpha}(f(t))&=\lim_{h\rightarrow 0}\frac{f(t+h|t|^{1-\alpha})-f(t)}{h}
=|t|^{1-\alpha}\lim_{u\rightarrow 0}\frac{f(t+u)-f(t)}{u}
=|t|^{1-\alpha} f'(t).
\end{align*}
Thus, $f'(t)$ exist if and only if $T_{\alpha}(f(t))$ exist for all $t>0$. But $T_{\alpha}(f(0))$ exist even though $f'(0)$ does not exist. For example if $f(t)=t^{\beta}$ then $f'(0)$ does not exist for $0\leq \beta<1$ but $T_{\alpha}(f(0))=0$.
The classical Laplace transform of a function $f:[0, \infty)\rightarrow \mathbb C$ is defined by the improper integral
\[ F(s)=\mathcal{L}(f(t))(s)=\int_{0}^{\infty} e^{-st}f(t)dt,\quad \operatorname{Re}(s) > a,\quad a\in \mathbb{R}\] 
If the function lies in the space of piecewise continuous (ie.$f$ has finite number of finite jump discontinuities only) and exponential order (ie. $|f(t)|<Me^{at}$ for some $M>0$ and $a\in \mathbb R$ ) the $\mathcal{L}(f(t))(s)$ exist for all $s$ with  $\operatorname{Re}(s) > a$. But there are many functions outside this space whose Laplace transform exist.
This definition of the classical Laplace transform can be naturally extended to the conformable fractional Laplace transform. The conformable fractional Laplace transform, introduced by Abdeljawad \cite{abdeljawad2015}, is defined as
\begin{align}
	F_{a}^{\alpha}(s)&=\mathcal{L}_{a}^{\alpha}(f(t))(s)=\int_{a}^{\infty} e^{-s\frac{(t-a)^{\alpha}}{\alpha}}(t-a)^{\alpha-1}f(t)dt,\\\text{where}& \quad a\in \mathbb R, f:[a,\infty]\rightarrow \mathbb C, \alpha\in[0,\infty).
\end{align}
Now we give an important characterization of the conformable fractional Laplace transform.
\begin{theorem}
\label{thm:2.1}
	Let $a\in \mathbb R$ and $\phi_a:[a,\infty)\rightarrow \mathbb R$ be the function defined by
	$\phi_a(t)=\frac{(t-a)^{\alpha}}{\alpha}.$
	Let $f:[a, \infty) \rightarrow \mathbb C$ be a function. Then $\mathcal{L}_{a}^{\alpha}(f(t))(s)$ exist if and only if $\mathcal{L}((f\circ\phi_a^{-1})(t))(s)$ exist on $[a,\infty)$ for all $s$ with $real(s)>b$ for some $b\in \mathbb R$. 
\end{theorem}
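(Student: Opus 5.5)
The plan is a direct change of variables in the improper integral defining $\mathcal{L}_{a}^{\alpha}$. Throughout we take $\alpha>0$, since for $\alpha=0$ the map $\phi_a$ is not defined. For $\alpha>0$, $\phi_a(t)=\frac{(t-a)^{\alpha}}{\alpha}$ is a continuous, strictly increasing bijection of $[a,\infty)$ onto $[0,\infty)$. It is $C^1$ on $(a,\infty)$ with $\phi_a'(t)=(t-a)^{\alpha-1}>0$. Its inverse is $\phi_a^{-1}(u)=a+(\alpha u)^{1/\alpha}$, $u\ge 0$. So $f\circ\phi_a^{-1}$ is a function on $[0,\infty)$, and its classical Laplace transform is the object meant in the statement.

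The key step is the substitution $u=\phi_a(t)$, so that $du=(t-a)^{\alpha-1}\,dt$. On a compact truncation $[a+\varepsilon,T]\subset(a,\infty)$ the substitution rule gives
\begin{align*}
\int_{a+\varepsilon}^{T} e^{-s\frac{(t-a)^{\alpha}}{\alpha}}(t-a)^{\alpha-1}f(t)\,dt=\int_{\phi_a(a+\varepsilon)}^{\phi_a(T)} e^{-su}\,f(\phi_a^{-1}(u))\,du .
\end{align*}
This needs only $f$ to be (locally Riemann or Lebesgue) integrable on compact subintervals, which is implicit in either transform making sense.

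Next we pass to the limits $\varepsilon\to0^+$ and $T\to\infty$. Since $\phi_a$ is a homeomorphism with $\phi_a(a+\varepsilon)\to0^+$ and $\phi_a(T)\to\infty$, the left-hand improper integral converges if and only if the right-hand one does. The two limits are then equal, so
\begin{align*}
\mathcal{L}_{a}^{\alpha}(f(t))(s)=\mathcal{L}\big((f\circ\phi_a^{-1})(u)\big)(s).
\end{align*}
The correspondence between limits goes both ways. Any sequences $\varepsilon_n\to0$, $T_n\to\infty$ give $u$-endpoints tending to $0$ and $\infty$, and conversely via $\phi_a^{-1}$. Hence existence for a given $s$ on one side is equivalent to existence for the same $s$ on the other.

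Finally, because the identity holds pointwise in $s$, existence for all $s$ with $\operatorname{Re}(s)>b$ transfers verbatim in both directions with the same $b$. This proves the equivalence.

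The main obstacle is the endpoint $t=a$ when $0<\alpha<1$. There the weight $(t-a)^{\alpha-1}$ blows up, so the left side is genuinely improper at $a$ and the substitution cannot be applied on $[a,T]$ directly. Treating both endpoints as limits, as above, handles this. If one prefers Lebesgue integrals, the same argument is the change of variables formula for the absolutely continuous increasing map $\phi_a$, applied to $|f|$ as well, which gives equivalence of absolute convergence.
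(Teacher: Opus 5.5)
Your proof is correct and follows the same route as the paper, which simply substitutes $u=\phi_a(t)$, $du=(t-a)^{\alpha-1}\,dt$ to identify $\mathcal{L}_{a}^{\alpha}(f(t))(s)$ with $\mathcal{L}((f\circ\phi_a^{-1})(u))(s)$. Your truncation to $[a+\varepsilon,T]$, the passage to the limits, and the remark that $\alpha>0$ is needed make rigorous the equivalence of convergence that the paper's one-line computation takes for granted.
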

\begin{proof}
	If we use the change of variable $u=\phi_a(t)$ then we get\\
	$du=\phi_a'(t)dt=(t-a)^{\alpha-1}dt$ and hence
	\begin{align*}
	\mathcal{L}_{a}^{\alpha}(f(t))(s)&=\int_{a}^{\infty}e^{-s\frac{(t-a)^{\alpha}}{\alpha}}f(t)(t-a)^{\alpha-1}dt\\
	&=\int_{a}^{\infty}e^{-s\phi_a(t)}f(t)\phi_a'(t)dt\\
	&=\int_{0}^{\infty}e^{-su}f\circ\phi_a^{-1}(u)du\\
	&=\mathcal{L}((f\circ\phi_a^{-1})(t))(s).
	\end{align*}
Hence proof.
\end{proof}
There exist functions whose classical Laplace transform exists but whose conformable fractional Laplace transform may fail to exist. 
Consider function $f(t) = e^{t^{\beta}}$ with $\beta > 0$. Its classical Laplace transform exists if and only if $\beta \leq 1$.  Moreover, $
\mathcal{L}_{0}^{\alpha}(f(t)) = \mathcal{L}\left(f\left((\alpha u)^{1/\alpha}\right)\right)(s) = \frac{1}{\alpha} \mathcal{L}\left(e^{u^{\beta/\alpha}}\right)\left(\frac{s}{\alpha}\right),$
which exists if and only if $\beta \leq \alpha.$
\medskip
Note that if we have a function
$ f: [a, \infty) \to \mathbb{C}, $ then we can define a function
$ f_a(t) = f(t + a): [0, \infty) \to \mathbb{C}. $ Moreover,
\[
\mathcal{L}_a^\alpha(f(t))(s) = \int_a^\infty e^{-s \frac{(t-a)^\alpha}{\alpha}} f(t) (t-a)^{\alpha-1} \, dt
= \int_0^\infty e^{-s \frac{u^\alpha}{\alpha}} f(u + a) u^{\alpha-1} \, du = \mathcal{L}_0^\alpha (f_a(t))(s).
\]
Hence, to develop any property of $\mathcal{L}_a^{\alpha}$ is equivalent to developing the same property for $\mathcal{L}_0^{\alpha}$.
For this reason, throughout the remainder of this work, we investigate the properties of $\mathcal{L}_0^\alpha$ on the space of complex- valued functions defined on
$[0,\infty)$.

\medskip
\noindent
We now turn to the essential working properties of the conformable fractional Laplace transform, which are important in analytical computations as well as in solving conformable fractional differential equations.
\begin{theorem}
Let \(\mathcal{V}= \{ f : [0, \infty) \to \mathbb{C} : \mathcal{L}_0^\alpha(f(t)) \text{ exists} \} \) and \( f,g \in \mathcal{V} \). Suppose that
\(\mathcal{L}_0^\alpha \{ f(t) \} = F_\alpha(s) \). Then  
\begin{enumerate} \setlength{\itemsep}{4pt}
     \item{Linearity:}  
     $\mathcal{L}_{0}^{\alpha}$ is a linear map on the $\mathbb{C}$-vector space $V$,

    \item{Scaling Property:}
     For \( a > 0 \),
    \( \mathcal{L}_0^\alpha \{ f(at) \} = \frac{1}{a^\alpha} \, F\!\left( \frac{s}{a^\alpha} \right),\)

     \item{First Shifting Property:}
     For \( s > a \), \(\mathcal{L}_0^\alpha \left\{ e^{a\frac{t^\alpha}{\alpha}} f(t) \right\} = \mathcal{L}_{0}^{\alpha}(f(t))(s - a)\),

      \item{Second Shifting Property:}
      Suppose \( u_a(t) = 1 \), if \( t > a \), and \( u_a(t) = 0 \), if \( 0 \leq t \leq a \).
      Then, for \( a \geq 0 \), \(\mathcal{L}_0^\alpha \left\{ u_a(t) f\!\left( (t^\alpha - a^\alpha)^{1/\alpha} \right) \right\} = e^{-s\frac {a^\alpha}{\alpha}} \, \mathcal{L}_0^\alpha \{ f(t) \}(s),\)

      \item{Multiplication by $t^{\alpha}$:}
      \(\mathcal{L}_0^\alpha \{ t^\alpha f(t) \} = -\alpha \frac{d}{ds}\mathcal{L}_0^\alpha \{ f(t) \}(s) \),

      \item{Division by $t^{\alpha}$:}
      \(\mathcal{L}_0^\alpha \left\{ \frac{f(t)}{t^\alpha} \right\} = \frac{1}{\alpha} \int_s^\infty  \mathcal{L}_0^\alpha \{ f(t) \}(s) \, ds \).

      \item {Uniqueness:}
      If \(\mathcal{L}_0^\alpha \{ f(t) \}(s) = \mathcal{L}_0^\alpha \{ g(t) \}(s)\)
      for all \( s \) in some half-plane \( \operatorname{Re}(s) > a \),
      then \(f(t) = g(t), \quad \text{for all } t \geq 0.\)
    \end{enumerate}
\end{theorem}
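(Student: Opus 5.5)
The approach is to reduce every item to a statement about the classical Laplace transform via Theorem~\ref{thm:2.1}. With $a=0$ we have $\phi_0(t)=t^\alpha/\alpha$ and $\phi_0^{-1}(u)=(\alpha u)^{1/\alpha}$. Writing $g=f\circ\phi_0^{-1}$, Theorem~\ref{thm:2.1} gives
\[
\mathcal{L}_0^\alpha\{f(t)\}(s)=\mathcal{L}\{g(u)\}(s).
\]
Each property of $\mathcal{L}_0^\alpha$ then becomes the corresponding classical property applied to $g$. Alternatively, some items can be checked directly on the defining integral with the substitution $u=t^\alpha/\alpha$.

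\textbf{Linearity} is immediate from the linearity of the integral, provided $\mathcal V$ is closed under linear combinations.

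\textbf{Scaling.} In $\int_0^\infty e^{-st^\alpha/\alpha}t^{\alpha-1}f(at)\,dt$, substitute $\tau=at$. Then $t^{\alpha-1}\,dt=a^{-\alpha}\tau^{\alpha-1}\,d\tau$ and $s t^\alpha=(s/a^\alpha)\tau^\alpha$, which yields $a^{-\alpha}F(s/a^\alpha)$.

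\textbf{First shifting.} Combine the exponentials: $e^{at^\alpha/\alpha}e^{-st^\alpha/\alpha}=e^{-(s-a)t^\alpha/\alpha}$.

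\textbf{Second shifting.} Pass to $u=t^\alpha/\alpha$ and set $c=a^\alpha/\alpha$. Then $(t^\alpha-a^\alpha)^{1/\alpha}=\phi_0^{-1}(u-c)$, and $u_a(t)$ becomes the Heaviside function $H(u-c)$. So the left side equals $\mathcal{L}\{H(u-c)\,g(u-c)\}(s)$. The classical second shift theorem gives $e^{-cs}\mathcal{L}\{g\}(s)$, which is the claim.

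\textbf{Multiplication by $t^\alpha$.} Note $t^\alpha=\alpha u$, so $\mathcal{L}_0^\alpha\{t^\alpha f\}=\alpha\,\mathcal{L}\{u\,g(u)\}$. The classical rule $\mathcal{L}\{u\,g\}=-\tfrac{d}{ds}\mathcal{L}\{g\}$ then gives $-\alpha\,\tfrac{d}{ds}F_\alpha(s)$. Here I would justify differentiation under the integral sign. If $\mathcal{L}\{g\}$ converges at some $s_0$, it is analytic for $\operatorname{Re}s>\operatorname{Re}s_0$, since the integral converges uniformly on compact subsets of that half-plane (standard Abel-type argument). So I would state the property for $\operatorname{Re}s$ strictly inside the half-plane of convergence.

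\textbf{Division by $t^\alpha$.} Similarly, $f/t^\alpha$ corresponds to $g(u)/(\alpha u)$, and the classical rule $\mathcal{L}\{g(u)/u\}(s)=\int_s^\infty \mathcal{L}\{g\}(\sigma)\,d\sigma$ gives the formula. To justify it, I would write $\int_s^\infty\int_0^\infty e^{-\sigma u}g(u)\,du\,d\sigma$ and interchange the integrals by Fubini/Tonelli. This needs an extra hypothesis: $\lim_{t\to0^+}f(t)/t^\alpha$ exists, or more generally $g(u)/u$ is Laplace transformable, together with absolute convergence for real $s$ large. I would add this as an implicit assumption, as in the classical case.

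\textbf{Uniqueness.} This is the main obstacle. Equal conformable transforms give $\mathcal{L}\{g_1\}=\mathcal{L}\{g_2\}$ on a half-plane, where $g_i=f_i\circ\phi_0^{-1}$. Classical uniqueness (Lerch's theorem) then gives $g_1=g_2$ almost everywhere, hence at all points of continuity. Since $\phi_0$ is a homeomorphism of $[0,\infty)$ that maps null sets to null sets, $f=g$ almost everywhere, and everywhere if $f,g$ are continuous. Pointwise equality for all $t\ge0$ is false for arbitrary $f\in\mathcal V$, since changing $f$ at a single point leaves the transform unchanged. So I would prove the statement for continuous (or piecewise continuous, normalized) functions, which is presumably the intended setting.

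The only genuinely non-computational inputs are Lerch's theorem and the interchange-of-limits justifications in the derivative and division rules; everything else is direct substitution.
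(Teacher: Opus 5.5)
Your proposal is correct and follows essentially the same route as the paper: each item is transported through $\phi_0(t)=t^\alpha/\alpha$ to the corresponding classical Laplace-transform property, and your direct substitutions for scaling and first shifting are just that reduction written out. Your added conditions are genuinely needed and are skipped in the paper's proof. The division rule requires $f(t)/t^\alpha$ to be transformable (for $f\equiv 1$ both sides are infinite), and uniqueness only gives $f=g$ almost everywhere, or everywhere when $f,g$ are continuous.
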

\begin{proof}
 (1)  If \( f, g \in\mathcal{V} \), then the improper integrals \( \mathcal{L}_0^\alpha(f) \) and \( \mathcal{L}_0^\alpha(g) \) exist.
Hence \( \mathcal{L}_0^\alpha(f + g) \) and \( \mathcal{L}_0^\alpha(kf) \) exist for any \( k \in \mathbb{C} \).
Therefore \( \mathcal{V} \) is a \( \mathbb{C} \)-vector space.
By Theorem~\ref{thm:2.1},
\(\mathcal{L}_0^\alpha(f) = \mathcal{L}(f \circ \phi_0^{-1}),\)
where \( \phi_0(t) = \frac{t^\alpha}{\alpha} \).
Since \( \mathcal{L} \) is a linear operator, \( \mathcal{L}_0^\alpha \) is also a linear operator. \\
(2) Let \( g(t) = f(at) \) for \( a > 0 \).
Then 
\begin{align*}
\mathcal{L}_0^\alpha(f(at))(s)
          & =\mathcal{L}_0^\alpha(g(t))(s)\\
        &= \mathcal{L}(g(\phi_0^{-1}(t)))(s)\\
        & =\mathcal{L}(f(a\phi_0^{-1}(t)))(s)\\
        &=\mathcal{L}(f(\phi_0^{-1}(a^\alpha t)))(s)
        \quad\text{since}\quad  a\phi_0^{-1}(t) = \phi_0^{-1}(a^\alpha t) \\
         &= \frac{1}{a^\alpha} \mathcal{L}(f(\phi_0^{-1}(t)))\left( \frac{s}{a^\alpha} \right) \quad\text{since}\quad\mathcal{L}\{ h(at)\}=\frac{1}{a}\mathcal{L}\{h(t)\}\left( \frac{s}{a} \right)\\
     &=  \frac{1}{a^\alpha} \mathcal{L}_0^\alpha(f(t))\left( \frac{s}{a^\alpha} \right)
    \end{align*}
   (3) Let \(g(t) = e^{a \phi_0(t)} f(t),\) for all  \(t \geq 0,
   \) where \(\phi_0(t) = \frac{t^\alpha}{\alpha}.\)
   Then
     \begin{align*}
     \mathcal{L}_0^\alpha \left\{ e^{a\frac{t^\alpha}{\alpha}} f(t) \right\}
    &=\mathcal{L}_0^\alpha(g(t))(s)\\
    &= \mathcal{L}(g \circ \phi_0^{-1})(s)\\
   &= \mathcal{L}\left( e^{a \phi_0(\phi_0^{-1}(t))} f(\phi_0^{-1}(t)) \right)(s)\\
  &= \mathcal{L}\left( e^{a t} f(\phi_0^{-1}(t)) \right)(s)
  \quad\text{since}\quad \phi_0(\phi_0^{-1}(t)) = t\\
   &= \mathcal{L}\left( f(\phi_0^{-1}(t)) \right)(s - a)\quad\text{since}\quad \mathcal{L}\{e^{at}f(t)\}=F(s-a) \\
    & = \mathcal{L}_0^\alpha(f(t))(s - a).
  \end{align*}
(4) Let \(g(t) = u_a(t) f\!\left( (t^\alpha - a^\alpha)^{1/\alpha} \right)\), for  \(a \geq 0.\)
Then
\[
\mathcal{L}_0^\alpha(g(t))(s)
= \mathcal{L}(g(\phi_0^{-1}(t)))(s).
\]
Now,
\begin{align*}
g(\phi_0^{-1}(t))
= u_a(\phi_0^{-1}(t)) \, f\!\left( \left( (\phi_0^{-1}(t))^\alpha - a^\alpha \right)^{1/\alpha} \right)
= u_a(\phi_0^{-1}(t)) \, f\!\left( \phi_0^{-1}\!\left( t - \frac{a^\alpha}{\alpha} \right) \right).
\end{align*}
Using the second shifting property of the classical Laplace transform, we obtain
\begin{align*}
    \mathcal{L}(g(\phi_0^{-1}(t)))(s)
= e^{-s\frac{a^\alpha}{\alpha}} \, \mathcal{L}(f(\phi_0^{-1}(t)))(s)
= e^{-s\frac{a^\alpha}{\alpha}} \, \mathcal{L}_0^\alpha(f(t))(s).
\end{align*}
(5) Since \( \mathcal{L}_0^\alpha(f(t)) = \mathcal{L}(f \circ \phi_0^{-1}),\)
we have
\begin{align*}
   \mathcal{L}_0^\alpha(t^\alpha f(t))
     &= \mathcal{L}((t^\alpha f(t)) \circ \phi_0^{-1})\\
     &= \mathcal{L}((\phi_0^{-1}(t))^\alpha f(\phi_0^{-1}(t)))\\
     & = \mathcal{L}(\alpha t \, f(\phi_0^{-1}(t))) \quad \text{since}\quad(\phi_0^{-1}(t))^\alpha = \alpha t \\
    &= \alpha \, \mathcal{L}(t \, f(\phi_0^{-1}(t)))\\
    &= -\alpha \frac{d}{ds} \mathcal{L}(f(\phi_0^{-1}(t)))\quad    \text{because}\quad\mathcal{L}\{t \, h(t)\} = -\frac{d}{ds} \mathcal{L}\{h\mathcal{L}(t)\}      \\
    &= -\alpha \frac{d}{ds} \mathcal{L}_0^\alpha(f(t)).
\end{align*}
(6) Similarly, 
\begin{align*}
  \mathcal{L}_0^\alpha\left( \frac{f(t)}{t^\alpha} \right)
&= \mathcal{L}\left( \left( \frac{f(t)}{t^\alpha} \right) \circ \phi_0^{-1} \right)\\
&= \mathcal{L}\left( \frac{f(\phi_0^{-1}(t))}{(\phi_0^{-1}(t))^\alpha} \right)\\ 
&= \mathcal{L}\left( \frac{f(\phi_0^{-1}(t))}{\alpha t} \right)     \quad \text{since}\quad(\phi_0^{-1}(t))^\alpha = \alpha t \\
&= \frac{1}{\alpha} \, \mathcal{L}\left( \frac{f(\phi_0^{-1}(t))}{t} \right)\\
&= \frac{1}{\alpha} \int_s^\infty \mathcal{L}(f(\phi_0^{-1}(t)))(s) \, ds \quad\text{since}\quad\mathcal{L}\left\{\frac{h(t)}{t}\right\} = \int_s^\infty H(s) \, ds\\
&=\frac{1}{\alpha} \int_s^\infty  \mathcal{L}_0^\alpha \{ f(t) \}(s) \, ds 
\end{align*}
(7) Suppose
\(
\mathcal{L}_0^\alpha(f(t))(s) = \mathcal{L}_0^\alpha(g(t))(s)
\)
for all \( s \) in some half-plane \( \operatorname{Re}(s) > a\).
Then
\(
\mathcal{L}(f \circ \phi_0^{-1})(s) = \mathcal{L}(g \circ \phi_0^{-1})(s).
\)
By the uniqueness property of the classical Laplace transform,
\(
f \circ \phi_0^{-1} = g \circ \phi_0^{-1}.
\)
Hence
\(
f(t) = g(t), \quad \text{for all } t \geq 0.
\)
\end{proof}
\section{Conformable Laplace Transform for Derivatives and Integrals}
We come up with a conformable fractional Laplace transform of conformable derivatives and integrals, which is a critical analysis tool in the solution of conformable fractional differential equations.
\begin{theorem} \label{thm:2.1}
 Let \( f(t) \) be differentiable on \( [0, \infty) \) and suppose that
$
\mathcal{L}_0^\alpha(f(t))(s) = F_{\alpha}(s) \quad \text{where}\quad 0 < \alpha \leq 1.
$
Then the conformable fractional Laplace transform of the conformable derivative satisfies
\[
\mathcal{L}_{o}^{\alpha}\{T_{\beta}f(t)\}
= \left[e^{-s\frac{t^{\alpha}}{\alpha}}t^{\alpha-\beta}f(t)\right]_{0}^{\infty}+
s\mathcal{L}_{o}^{\alpha}\{t^{\alpha-\beta}f(t)\}
+
(\beta-\alpha)\mathcal{L}_{o}^{\alpha}\{t^{-\beta}f(t)\}
\]  
\end{theorem}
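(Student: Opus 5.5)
We work for $t>0$, where $T_\beta f(t)=t^{1-\beta}f'(t)$ as computed in Section 2, with $0<\beta\le 1$. The value at the single point $t=0$ does not affect the integral. Substituting this into the definition of $\mathcal{L}_0^\alpha$ gives
\[
\mathcal{L}_0^\alpha\{T_\beta f(t)\}=\int_0^\infty e^{-s\frac{t^\alpha}{\alpha}}t^{\alpha-1}t^{1-\beta}f'(t)\,dt=\int_0^\infty e^{-s\frac{t^\alpha}{\alpha}}t^{\alpha-\beta}f'(t)\,dt .
\]
The plan is a single integration by parts, taking $u=e^{-s t^\alpha/\alpha}t^{\alpha-\beta}$ and $dv=f'(t)\,dt$, so that $v=f$.

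The derivative of $u$ follows from the product rule:
\[
\frac{d}{dt}\Big(e^{-s\frac{t^\alpha}{\alpha}}t^{\alpha-\beta}\Big)=-s\,t^{\alpha-1}e^{-s\frac{t^\alpha}{\alpha}}t^{\alpha-\beta}+(\alpha-\beta)t^{\alpha-\beta-1}e^{-s\frac{t^\alpha}{\alpha}}.
\]
Hence
\[
\int_0^\infty e^{-s\frac{t^\alpha}{\alpha}}t^{\alpha-\beta}f'(t)\,dt=\Big[e^{-s\frac{t^\alpha}{\alpha}}t^{\alpha-\beta}f(t)\Big]_0^\infty+s\int_0^\infty e^{-s\frac{t^\alpha}{\alpha}}t^{\alpha-1}\big(t^{\alpha-\beta}f(t)\big)dt+(\beta-\alpha)\int_0^\infty e^{-s\frac{t^\alpha}{\alpha}}t^{\alpha-1}\big(t^{-\beta}f(t)\big)dt .
\]
Each of the two integrals is exactly $\mathcal{L}_0^\alpha$ applied to $t^{\alpha-\beta}f(t)$ and to $t^{-\beta}f(t)$ respectively, because in each case the kernel factor $t^{\alpha-1}$ has been separated out. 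Reading them this way gives the stated formula. As a check, the case $\beta=\alpha$ reproduces the familiar identity $\mathcal{L}_0^\alpha\{T_\alpha f\}=sF_\alpha(s)-f(0)$.

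The main obstacle is justification rather than computation. Integration by parts must be done on $[\epsilon,R]$, followed by the limits $\epsilon\to0^+$ and $R\to\infty$. This requires that the two transforms on the right exist, for $\operatorname{Re}(s)$ large, so that the improper integrals converge. The bracketed boundary term is then interpreted as the corresponding limit; it may be infinite at $0$ when $\beta>\alpha$, in which case the identity holds only in the sense that both sides are finite together. I would state these as standing hypotheses: $f$ of suitable growth, e.g. $f\circ\phi_0^{-1}$ of exponential order, and the boundary limits existing. The existence of the transforms can be checked through Theorem~\ref{thm:2.1} by transferring to the classical Laplace transform. The identity then follows from the limiting process, since everything else is the algebra above.
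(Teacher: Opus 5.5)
Your proposal is correct and follows the same argument as the paper: substitute $T_\beta f(t)=t^{1-\beta}f'(t)$, integrate by parts once with $u=e^{-st^{\alpha}/\alpha}t^{\alpha-\beta}$ and $dv=f'(t)\,dt$, and read the two resulting integrals as $\mathcal{L}_0^\alpha\{t^{\alpha-\beta}f(t)\}$ and $\mathcal{L}_0^\alpha\{t^{-\beta}f(t)\}$. Your truncation to $[\epsilon,R]$ is extra rigor the paper skips, but your side remark needs correcting: for, say, $f\in C^1[0,\infty)$ with $f(0)\neq 0$ and $\beta>\alpha$, the left side is finite, while the boundary term at $0$ and $(\beta-\alpha)\mathcal{L}_0^\alpha\{t^{-\beta}f(t)\}$ diverge separately and cancel, so in that case the formula holds only after combining their $\epsilon$-limits, not in the sense that ``both sides are finite together.''
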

\begin{proof}
Since $f$ is differentiable,
$\displaystyle 
T_t^\beta f(t) = t^{1-\beta} \frac{df(t)}{dt}=t^{1-\beta}f'(t) \quad \text{for} \quad 0<\beta\leq 1.
$
Hence, using the conformable Laplace transform, we have
\[
\mathcal{L}_{o}^{\alpha}\{T_{\beta}f(t)\}
=
\int_{0}^{\infty}
e^{-s\frac{t^{\alpha}}{\alpha}}
t^{1-\beta}f'(t)t^{\alpha-1}\,dt.
\]
Combining the powers of $t$,
\[
=
\int_{0}^{\infty}
e^{-\frac{s}{\alpha}t^{\alpha}}
t^{\alpha-\beta}f'(t)\,dt.
\]
Applying integration by parts, let
\[
u=e^{-s\frac{t^{\alpha}}{\alpha}}t^{\alpha-\beta}, 
\quad dv=f'(t)dt.
\]
Then
\[
v=f(t).
\]
Hence,
\[
\mathcal{L}_{o}^{\alpha}\{T_{\beta}f(t)\}
=
\left[e^{-s\frac{t^{\alpha}}{\alpha}}t^{\alpha-\beta}f(t)\right]_{0}^{\infty}
-
\int_{0}^{\infty}
f(t)\frac{d}{dt}
\left(e^{-s\frac{t^{\alpha}}{\alpha}}t^{\alpha-\beta}\right)dt.
\]
The boundary term vanishes. Using the product rule,
\[
\frac{d}{dt}
\left(e^{-s\frac{t^{\alpha}}{\alpha}}t^{\alpha-\beta}\right)
=
-s t^{2\alpha-\beta-1}e^{-s\frac{t^{\alpha}}{\alpha}}
+
(\alpha-\beta)t^{\alpha-\beta-1}e^{-s\frac{t^{\alpha}}{\alpha}}.
\]
Substituting,
\[
\mathcal{L}_{o}^{\alpha}\{T_{\beta}f(t)\}
=
\int_{0}^{\infty}
f(t)e^{-s\frac{t^{\alpha}}{\alpha}}
\left[
s t^{2\alpha-\beta-1}+(\beta-\alpha)t^{\alpha-\beta-1}
\right]dt.
\]
Splitting the integrals,
\[
= \left[e^{-s\frac{t^{\alpha}}{\alpha}}t^{\alpha-\beta}f(t)\right]_{0}^{\infty}+
s\int_{0}^{\infty}
e^{-s\frac{t^{\alpha}}{\alpha}}f(t)t^{2\alpha-\beta-1}dt
+
(\beta-\alpha)
\int_{0}^{\infty}
e^{-s\frac{t^{\alpha}}{\alpha}}f(t)t^{\alpha-\beta-1}dt.
\]
By definition of the conformable Laplace transform,
\[
\int_{0}^{\infty}
e^{-s\frac{t^{\alpha}}{\alpha}}f(t)t^{2\alpha-\beta-1}dt
=
\mathcal{L}_{o}^{\alpha}\{t^{\alpha-\beta}f(t)\},
\]
\[
\int_{0}^{\infty}
e^{-s\frac{t^{\alpha}}{\alpha}}f(t)t^{\alpha-\beta-1}dt
=
\mathcal{L}_{o}^{\alpha}\{t^{-\beta}f(t)\}.
\]
Thus,
\[
\mathcal{L}_{o}^{\alpha}\{T_{\beta}f(t)\}
= \left[e^{-s\frac{t^{\alpha}}{\alpha}}t^{\alpha-\beta}f(t)\right]_{0}^{\infty}+
s\mathcal{L}_{o}^{\alpha}\{t^{\alpha-\beta}f(t)\}
+
(\beta-\alpha)\mathcal{L}_{o}^{\alpha}\{t^{-\beta}f(t)\}.
\]
This completes the proof.
\end{proof}
\begin{corollary}
Let $f(t)$ be differentiable on $[0,\infty)$. From Theorem~\ref{thm:2.1}, the conformable fractional Laplace transform of the conformable derivative yields several important special cases depending on the values of $\alpha$ and $\beta$.
\begin{enumerate}
\item If $0<\alpha<1$ and $0<\beta<1$, then
\[
\mathcal{L}_0^{\alpha}\{T_{\beta}f(t)\}
= s\mathcal{L}_0^{\alpha}\{t^{\alpha-\beta}f(t)\}
+(\beta-\alpha)\mathcal{L}_0^{\alpha}\{t^{-\beta}f(t)\},
\]
\item If $\beta=1$ and $0<\alpha<1$, then
\[
\mathcal{L}_0^{\alpha}\{T_{1}f(t)\}
= s\mathcal{L}_0^{\alpha}\{t^{\alpha-1}f(t)\}
+(1-\alpha)\mathcal{L}_0^{\alpha}\{t^{-1}f(t)\}.
\]
\item If $\alpha=\beta$, then
\[
\mathcal{L}_0^{\alpha}\{T_{\alpha}f(t)\}
= s\mathcal{L}_0^{\alpha}\{f(t)\}-f(0).
\]
\item If $\alpha=1$ and $\beta=1$, then the conformable Laplace transform reduces to the classical Laplace transform, and we obtain
\[
\mathcal{L}\{f'(t)\}=s\mathcal{L}\{f(t)\}-f(0),
\]
which coincides with the classical Laplace transform property for derivatives.
\end{enumerate}
\end{corollary}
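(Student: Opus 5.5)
The plan is to specialize the identity of Theorem~\ref{thm:2.1} (the derivative formula),
\[
\mathcal{L}_0^{\alpha}\{T_{\beta}f(t)\}
= \Bigl[e^{-s\frac{t^{\alpha}}{\alpha}}t^{\alpha-\beta}f(t)\Bigr]_{0}^{\infty}
+ s\mathcal{L}_0^{\alpha}\{t^{\alpha-\beta}f(t)\}
+(\beta-\alpha)\mathcal{L}_0^{\alpha}\{t^{-\beta}f(t)\},
\]
in each of the four cases. In every case the work reduces to evaluating the boundary term and simplifying the two remaining transforms. Throughout I will assume the standing hypotheses implicit in the statement: $f$ is of suitable growth, say $|f(t)|\le Me^{c t^{\alpha}/\alpha}$ for large $t$, so that all the transforms exist for $\operatorname{Re}(s)>c$, and $f$ is bounded near $0$.

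For the upper limit, the factor $e^{-s t^{\alpha}/\alpha}$ dominates $t^{\alpha-\beta}f(t)$ as $t\to\infty$ when $\operatorname{Re}(s)>c$. So the contribution at $\infty$ vanishes in all cases.

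The lower limit is the delicate point and is what separates the cases. At $t=0$ the boundary term equals $\lim_{t\to0^+}t^{\alpha-\beta}f(t)$.
\begin{itemize}
\item In cases (1) and (2), the stated formulas have no boundary term. I will therefore need $\alpha>\beta$, or else $f(0)=0$ with $t^{\alpha-\beta}f(t)\to0$. I expect this to be the main obstacle, since for $\alpha<\beta$ and $f(0)\neq0$ the limit blows up. My approach is to record the implicit hypothesis $\lim_{t\to0^+}t^{\alpha-\beta}f(t)=0$ and, if necessary, to justify it by noting that $f\in C^1$ with $f(0)=0$ gives $t^{\alpha-\beta}f(t)=O(t^{1+\alpha-\beta})\to0$. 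Under that hypothesis, (1) is immediate, and (2) follows by substituting $\beta=1$.
\item In case (3), with $\alpha=\beta$, the boundary term is $[e^{-st^{\alpha}/\alpha}f(t)]_0^\infty=-f(0)$, using continuity of $f$ at $0$. The third term has coefficient $\beta-\alpha=0$ and drops out, and the middle term becomes $s\mathcal{L}_0^{\alpha}\{f\}$. This gives $s\mathcal{L}_0^{\alpha}\{f(t)\}-f(0)$.
\end{itemize}

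Finally, for case (4), I set $\alpha=1$ in case (3). Then $T_1f=f'$, and the kernel becomes $e^{-st}t^{0}$, so $\mathcal{L}_0^{1}$ coincides with the classical $\mathcal{L}$. This can also be read off Theorem~\ref{thm:2.1} (the characterization), since $\phi_0(t)=t$. The result is the classical rule $\mathcal{L}\{f'\}=s\mathcal{L}\{f\}-f(0)$.
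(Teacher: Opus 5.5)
Your route is the paper's own: the corollary is just the derivative identity of Theorem~\ref{thm:2.1} specialized case by case. The hypothesis $\lim_{t\to0^+}t^{\alpha-\beta}f(t)=0$ (e.g.\ $\alpha>\beta$, or $f(0)=0$) that you impose for (1)--(2) is genuinely necessary, because the paper simply asserts that the boundary term vanishes. As written, (1) at $\alpha=\beta$ contradicts (3) whenever $f(0)\neq0$. Moreover, for $\beta>\alpha$ with $f(0)\neq0$, the transform $\mathcal{L}_0^{\alpha}\{t^{-\beta}f(t)\}=\int_0^\infty e^{-st^{\alpha}/\alpha}t^{\alpha-\beta-1}f(t)\,dt$ on the right of (1)--(2) diverges at $0$.

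One small refinement in (3): if $f(0)\neq0$ then $\mathcal{L}_0^{\alpha}\{t^{-\alpha}f(t)\}$ also diverges, so do not multiply it by $\beta-\alpha=0$. Instead, set $\beta=\alpha$ before splitting the integral; then $\frac{d}{dt}e^{-st^{\alpha}/\alpha}=-st^{\alpha-1}e^{-st^{\alpha}/\alpha}$ produces no $t^{-1}$ term. Alternatively, note that $h=f\circ\phi_0^{-1}$ satisfies $h'(u)=(T_\alpha f)(\phi_0^{-1}(u))$ by the chain rule, so $\mathcal{L}_0^{\alpha}\{T_\alpha f\}=\mathcal{L}\{h'\}=sF_\alpha(s)-f(0)$ directly.
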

\begin{theorem}
Let $f(t)$ be $n$-times differentiable on $[0,\infty)$ and let $0<\alpha,\beta\leq 1$. Then the conformable fractional Laplace transform of the $n$-th conformable derivative of order $\beta$ is
\begin{align*}
\mathcal{L}_0^{\alpha} \left\{ (T_\beta)^n f(t) \right\} = s^n \mathcal{L}_0^{\alpha} \left\{ t^{n(\alpha-\beta)} f(t) \right\} 
&+ \sum_{k=1}^n \binom{n}{k} (\beta-\alpha)^k s^{\,n-k} \mathcal{L}_0^{\alpha} \left\{ t^{(n-k)(\alpha-\beta)-k\beta} f(t) \right\} \\
+ \text{boundary terms}.
\end{align*}
where the boundary terms are
$
f(0), T_\beta f(0), \ldots, (T_\beta)^{\,n-1} f(0),
$ and the conformable Laplace transform is
\[
\mathcal{L}_0^{\alpha} \{ f(t) \} = \int_0^\infty e^{-s \frac{t^\alpha}{\alpha}} t^{\alpha-1} f(t) \, dt.
\]
\end{theorem}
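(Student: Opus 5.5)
We argue by induction on $n$, using Theorem~\ref{thm:2.1} (the single-derivative formula) at each step, and we keep the boundary contributions collected in a lump that is only identified at the end. Write $\mathcal{L}:=\mathcal{L}_0^{\alpha}$ and $\gamma:=\alpha-\beta$. Theorem~\ref{thm:2.1} applied to a differentiable function $g$ reads
\[
\mathcal{L}\{T_\beta g\}=B[g]+s\,\mathcal{L}\{t^{\gamma}g\}+(\beta-\alpha)\,\mathcal{L}\{t^{-\beta}g\},\qquad B[g]=\Bigl[e^{-s\frac{t^\alpha}{\alpha}}t^{\gamma}g(t)\Bigr]_0^\infty .
\]
For $n=1$ this is exactly the claimed identity, since the sum has the single term $k=1$. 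That term is $(\beta-\alpha)s^{0}\mathcal{L}\{t^{-\beta}f\}$, and the boundary term involves only $f(0)$, together with the decay at $\infty$ that is implicit in the existence of the transforms.

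For the inductive step, set $g=(T_\beta)^{n}f$, which is differentiable because $f$ is $(n+1)$-times differentiable, and apply the one-step formula to $T_\beta g$. This produces the two transforms $\mathcal{L}\{t^{\gamma}(T_\beta)^n f\}$ and $\mathcal{L}\{t^{-\beta}(T_\beta)^n f\}$. The induction hypothesis as stated does not handle these, because they carry a power weight $t^{m}$. So I would strengthen the hypothesis: \emph{for every real $m$ (within the range where the transforms exist),}
\[
\mathcal{L}\{t^{m}(T_\beta)^n f\}=\sum_{k=0}^{n}\binom nk(\beta-\alpha+m)^{\underline{?}}\cdots
\]
and first work out the exact coefficients in the weighted one-step formula. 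For this I would redo the integration by parts in the proof of Theorem~\ref{thm:2.1} with $t^{m}$ inserted, which gives
\[
\mathcal{L}\{t^{m}T_\beta g\}=B_m[g]+s\,\mathcal{L}\{t^{m+\gamma}g\}+(\beta-\alpha-m)\,\mathcal{L}\{t^{m-\beta}g\}.
\]
Each application of $T_\beta$ then acts on monomial weights by the operator ``multiply by $s t^{\gamma}$'' plus ``multiply by $c\,t^{-\beta}$'', where the coefficient $c$ depends on the current exponent. Iterating $n$ times and expanding as a sum over words in these two moves yields the terms $s^{n-k}\mathcal{L}\{t^{(n-k)\gamma-k\beta}f\}$. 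Every exponent of that form arises, and the $\binom nk$ counts the words containing $k$ letters of the second type.

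The main obstacle is the coefficients. When the constant $c=\beta-\alpha-m$ is the same for every word, the multinomial count gives exactly $\binom nk(\beta-\alpha)^k$ as stated. In general, however, $c$ depends on the accumulated exponent $m$, so the true coefficients are products such as $(\beta-\alpha)(2(\beta-\alpha)+\dots)$ rather than pure powers. I would first check $n=2$ explicitly. If the stated coefficients fail there, I would present the result with the words-sum expression as the exact formula, and note that the statement's $\binom nk(\beta-\alpha)^k$ holds precisely in the regime where the exponent-dependence drops out. In particular this happens when $\alpha=\beta$: there all extra terms vanish and the formula collapses to $s^n\mathcal{L}\{f\}-\sum_{j=0}^{n-1}s^{n-1-j}(T_\beta)^j f(0)$, consistent with the corollary.

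Finally, I would identify the boundary lump. At $t=\infty$ the exponential factor kills everything, under the growth assumptions implicit in the existence of the transforms. At $t=0$, the lower limits contribute multiples $s^{n-1-j}$ times $(T_\beta)^j f(0)$, $j=0,\dots,n-1$, whenever the relevant power of $t$ is $t^{0}$, and vanish or must be assumed finite otherwise. This justifies describing them as the listed boundary terms.
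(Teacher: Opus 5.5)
You are right to distrust the coefficients, and the $n=2$ check you postpone settles the matter: for $n\ge 2$ and $\alpha\neq\beta$ the statement is false, so no proof of it can exist, and the conditional part of your plan should become a definite conclusion. Your weighted one-step formula $\mathcal{L}_0^{\alpha}\{t^{m}T_\beta g\}=B_m[g]+s\,\mathcal{L}_0^{\alpha}\{t^{m+\alpha-\beta}g\}+(\beta-\alpha-m)\,\mathcal{L}_0^{\alpha}\{t^{m-\beta}g\}$ is correct. Applying it twice, with $m=\alpha-\beta$ and $m=-\beta$ in the second round, gives
\[
\mathcal{L}_0^{\alpha}\{(T_\beta)^2 f\}=s^2\,\mathcal{L}_0^{\alpha}\{t^{2(\alpha-\beta)}f\}+3(\beta-\alpha)\,s\,\mathcal{L}_0^{\alpha}\{t^{\alpha-2\beta}f\}+(\beta-\alpha)(2\beta-\alpha)\,\mathcal{L}_0^{\alpha}\{t^{-2\beta}f\}+\text{boundary terms},
\]
and not the claimed $2(\beta-\alpha)$ and $(\beta-\alpha)^2$. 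Here is a concrete counterexample. Take $\alpha=1$, $\beta=\tfrac12$, $f(t)=t^3e^{-t}$. Then $(T_{1/2})^2f=\tfrac12 f'+tf''=\bigl(\tfrac{15}{2}t^2-\tfrac{13}{2}t^3+t^4\bigr)e^{-t}$, whose transform is $15(s+1)^{-3}-39(s+1)^{-4}+24(s+1)^{-5}$. Every boundary contribution vanishes; in particular $f(0)=T_{1/2}f(0)=0$. Yet the right-hand side of the statement exceeds the true value by $3s(s+1)^{-4}+\tfrac12(s+1)^{-3}$.

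The paper's own proof breaks at exactly the step you isolated. In its $n=2$ computation it integrates by parts $\mathcal{L}_0^{\alpha}\{t^{\alpha+1-2\beta}f'\}=\mathcal{L}_0^{\alpha}\{t^{\alpha-\beta}T_\beta f\}$ and $\mathcal{L}_0^{\alpha}\{t^{1-2\beta}f'\}=\mathcal{L}_0^{\alpha}\{t^{-\beta}T_\beta f\}$ with the unweighted coefficient $\beta-\alpha$. Differentiating the extra weight actually gives $\beta-\alpha-m$, namely $2(\beta-\alpha)$ and $2\beta-\alpha$. The paper then extrapolates a ``binomial pattern'' to all $n$ with no induction. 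Your strengthened induction over weights is the correct repair, but what it proves is the word-sum formula, and two points in your description need adjusting.

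First, $\binom nk$ does not survive as a factor. The coefficient of $s^{n-k}\mathcal{L}_0^{\alpha}\{t^{(n-k)(\alpha-\beta)-k\beta}f\}$ is a sum over the $\binom nk$ words of products that differ from word to word. A second-type letter applied after $j$ first-type and $i$ second-type letters contributes $(i+j+1)\beta-(j+1)\alpha$. For instance, the $k=1$ coefficient is $\tfrac{n(n+1)}{2}(\beta-\alpha)$, not $n(\beta-\alpha)$. So for $n\ge 2$ and $0<\alpha,\beta\le 1$, the stated coefficients are correct only when $\alpha=\beta$.

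Second, $\alpha=\beta$ is not a case where the exponent dependence drops out, since later second-type letters would carry $i\alpha\neq 0$. Rather, the first second-type letter in every word carries the factor $0$. That is why only the $k=0$ term survives there, and why your collapsed formula $s^n\mathcal{L}_0^{\alpha}\{f\}-\sum_{j=0}^{n-1}s^{n-1-j}(T_\beta)^jf(0)$ is correct in that case.
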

\begin{proof}
    From Theorem~\ref{thm:2.1}, for the first conformable derivative, we have
    \[
\mathcal{L}_{o}^{\alpha}\{T_{\beta}f(t)\}
= \left[e^{-s\frac{t^{\alpha}}{\alpha}}t^{\alpha-\beta}f(t)\right]_{0}^{\infty}+
s\mathcal{L}_{o}^{\alpha}\{t^{\alpha-\beta}f(t)\}
+
(\beta-\alpha)\mathcal{L}_{o}^{\alpha}\{t^{-\beta}f(t)\}
\]  
Applying the conformable fractional Laplace transform again,
\[
\mathcal{L}_0^{\alpha}\{(T_{\beta})^{2}f(t)\}
=
\mathcal{L}_0^{\alpha}\{T_{\beta}(T_{\beta}f(t))\}.
\]
Using the first derivative formula with
\(
g(t)=T_{\beta}f(t),
\)
\[
\mathcal{L}_0^{\alpha}\{T_{\beta}g(t)\}
= \left[e^{-s\frac{t^{\alpha}}{\alpha}}t^{\alpha-\beta}g(t)\right]_{0}^{\infty}+
s\mathcal{L}_0^{\alpha}\{t^{\alpha-\beta}g(t)\}
+
(\beta-\alpha)\mathcal{L}_0^{\alpha}\{t^{-\beta}g(t)\}.
\]
Hence,
\[
\mathcal{L}_0^{\alpha}\{(T_{\beta})^{2}f(t)\}
= \left[e^{-s\frac{t^{\alpha}}{\alpha}}t^{\alpha-\beta}T_{\beta}f(t)\}\right]_{0}^{\infty}+
s\mathcal{L}_0^{\alpha}\{t^{\alpha-\beta}T_{\beta}f(t)\}
+
(\beta-\alpha)\mathcal{L}_0^{\alpha}\{t^{-\beta}T_{\beta}f(t)\}.
\]
Using the definition of the conformable derivative
\(
T_{\beta}f(t)=t^{1-\beta}f'(t),
\)
we obtain
\[
t^{\alpha-\beta}T_{\beta}f(t)
=t^{\alpha-\beta}\,t^{1-\beta}f'(t)=t^{\alpha+1-2\beta}f'(t),
\quad\text{and}\quad t^{-\beta}T_{\beta}f(t) = t^{1-2\beta}f'(t).
\]
Thus,
\begin{align*}
\mathcal{L}_0^{\alpha}\{(T_{\beta})^{2}f(t)\}
&= \left[e^{-s\frac{t^{\alpha}}{\alpha}}t^{\alpha-\beta}t^{\alpha-\beta}T_{\beta}f(t)\}\right]_{0}^{\infty}\\
& +
s\mathcal{L}_0^{\alpha}\{t^{\alpha+1-2\beta}f'(t)\}\\
&+
(\beta-\alpha)\mathcal{L}_0^{\alpha}\{t^{1-2\beta}f'(t)\}.
\end{align*}
Applying integration by parts to the above transforms, we obtain
\[
\mathcal{L}_0^{\alpha}\{t^{\alpha+1-2\beta}f'(t)\}
=
\left[e^{-\frac{s t^{\alpha}}{\alpha}} t^{2\alpha-2\beta}f(t)\right]_{0}^{\infty}
+s\mathcal{L}_0^{\alpha}\{t^{2(\alpha-\beta)}f(t)\}
+(\beta-\alpha)\mathcal{L}_0^{\alpha}\{t^{\alpha-2\beta}f(t)\},
\]
and
\[
\mathcal{L}_0^{\alpha}\{t^{1-2\beta}f'(t)\}
=
\left[e^{-\frac{s t^{\alpha}}{\alpha}} t^{\alpha-2\beta}f(t)\right]_{0}^{\infty}
+s\mathcal{L}_0^{\alpha}\{t^{\alpha-2\beta}f(t)\}
+(\beta-\alpha)\mathcal{L}_0^{\alpha}\{t^{-2\beta}f(t)\}.
\]
Substituting these expressions back, we obtain
\begin{align*}
\mathcal{L}_0^{\alpha}\{(T_{\beta})^{2}f(t)\}
&= \left[e^{-s\frac{t^{\alpha}}{\alpha}}t^{\alpha-\beta}t^{\alpha-\beta}T_{\beta}f(t)\}\right]_{0}^{\infty}\\
& + s\left[e^{-\frac{s t^{\alpha}}{\alpha}} t^{2\alpha-2\beta}f(t)\right]_{0}^{\infty}
+s^2\mathcal{L}_0^{\alpha}\{t^{2(\alpha-\beta)}f(t)\}
+s(\beta-\alpha)\mathcal{L}_0^{\alpha}\{t^{\alpha-2\beta}f(t)\}\\
&+ (\beta-\alpha)\left[e^{-\frac{s t^{\alpha}}{\alpha}} t^{\alpha-2\beta}f(t)\right]_{0}^{\infty}
+s(\beta-\alpha)\mathcal{L}_0^{\alpha}\{t^{\alpha-2\beta}f(t)\}\\
&+(\beta-\alpha)^2\mathcal{L}_0^{\alpha}\{t^{-2\beta}f(t)\}.
\end{align*}
\begin{align*}
\mathcal{L}_0^{\alpha}\{(T_{\beta})^{2}f(t)\}
&=s^2\mathcal{L}_0^{\alpha}\{t^{2(\alpha-\beta)}f(t)\}
+s(\beta-\alpha)\mathcal{L}_0^{\alpha}\{t^{\alpha-2\beta}f(t)
+s(\beta-\alpha)\mathcal{L}_0^{\alpha}\{t^{\alpha-2\beta}f(t)\}\\
&+(\beta-\alpha)^2\mathcal{L}_0^{\alpha}\{t^{-2\beta}f(t)\}
+ \text{boundary terms}
\end{align*}
where boundary terms are
\begin{align*}
 (\beta-\alpha)\left[e^{-\frac{s t^{\alpha}}{\alpha}} t^{\alpha-2\beta}f(t)\right]_{0}^{\infty},\quad
 s\left[e^{-\frac{s t^{\alpha}}{\alpha}} t^{2\alpha-2\beta}f(t)\right]_{0}^{\infty},
 \quad\left[e^{-s\frac{t^{\alpha}}{\alpha}}t^{\alpha-\beta}t^{\alpha-\beta}T_{\beta}f(t)\}\right]_{0}^{\infty}
\end{align*}
Since the exponential term vanishes as $t \to \infty$, the boundary contributions reduce to the initial values involving
$
f(0)$, $T_{\beta}f(0).$
 Therefore, the conformable fractional Laplace transform of the second conformable derivative is given by
\[
\mathcal{L}_0^{\alpha}\{(T_{\beta})^{2}f(t)\}
=
s^{2}\mathcal{L}_0^{\alpha}\{t^{2(\alpha-\beta)}f(t)\}
+2(\beta-\alpha)s\mathcal{L}_0^{\alpha}\{t^{\alpha-2\beta}f(t)\}
+(\beta-\alpha)^{2}\mathcal{L}_0^{\alpha}\{t^{-2\beta}f(t)\},
\]
together with boundary terms involving $f(0)$ and $T_{\beta}f(0)$ where $0<\alpha,\beta\le1$.\\
Continuing this procedure, each application of the conformable fractional Laplace transform introduces an additional factor of $s$ and additional terms involving powers of $(\beta-\alpha)$. The coefficients follow the binomial pattern arising from repeated application of the transform.
Thus, after $n$ iterations we obtain
\begin{align*}
\mathcal{L}_0^{\alpha} \left\{ (T_\beta)^n f(t) \right\} = s^n \mathcal{L}_0^{\alpha} \left\{ t^{n(\alpha-\beta)} f(t) \right\} 
&+ \sum_{k=1}^n \binom{n}{k} (\beta-\alpha)^k s^{\,n-k} \mathcal{L}_0^{\alpha} \left\{ t^{(n-k)(\alpha-\beta)-k\beta} f(t) \right\} \\
+ \text{boundary terms}.
\end{align*}
where the boundary terms are
$
f(0), T_\beta f(0), \ldots, (T_\beta)^{\,n-1} f(0).
$\\
Hence proved. 
\end{proof}
\begin{theorem}\label{thm:3.4}
Let the conformable fractional integral of a function
$f(t)$ be defined by
$
g(t)=\int_{0}^{t}f(p)p^{\beta-1}dp,\quad 0<\beta\le 1,
$
and suppose that the conformable fractional Laplace transform of 
$f(t)$ exists and is given by
$
\mathcal{L}_0^{\alpha}\{f(t)\}=F_{\alpha}(s)\quad0<\alpha\le 1,
$ then conformable fractional Laplace transform of $g(t)$ is
\[
\mathcal{L}_{o}^{\alpha}\{g(t)\}=\frac{1}{s}\mathcal{L}_{o}^{\alpha}\{t^{\alpha-\beta}f(t)\}.
\]
\end{theorem}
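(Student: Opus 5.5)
The plan is a single integration by parts in the defining integral of $\mathcal{L}_0^{\alpha}$, mirroring the proof of the derivative formula (Theorem~\ref{thm:2.1} of this section) but run in the opposite direction. The statement is the integral counterpart of Corollary (3), $\mathcal{L}_0^{\alpha}\{T_\alpha f\}=s\mathcal{L}_0^\alpha\{f\}-f(0)$. The two facts I will use are $g(0)=0$ and $g'(t)=f(t)t^{\beta-1}$, the latter by the fundamental theorem of calculus, assuming $f$ is continuous or at least locally integrable on $[0,\infty)$.

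\textbf{Step 1: integrate by parts.} I write
\[
\mathcal{L}_0^{\alpha}\{g(t)\}=\int_0^\infty g(t)\,e^{-s\frac{t^\alpha}{\alpha}}t^{\alpha-1}\,dt .
\]
I take $u=g(t)$ and $dv=e^{-s t^\alpha/\alpha}t^{\alpha-1}dt$. The weight is chosen so that $dv$ is an exact differential, with
\[
v=-\tfrac1s e^{-s t^\alpha/\alpha}.
\]
This gives
\[
\mathcal{L}_0^{\alpha}\{g\}=\Big[-\tfrac1s g(t)e^{-s t^\alpha/\alpha}\Big]_0^\infty+\tfrac1s\int_0^\infty e^{-s t^\alpha/\alpha}f(t)t^{\beta-1}\,dt .
\]

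\textbf{Step 2: kill the boundary term.} At $t=0$ the boundary term vanishes because $g(0)=0$. At $t=\infty$ it vanishes for $\operatorname{Re}(s)$ large. To see this, I bound $|g(t)|$ by $\int_0^t|f(p)|p^{\beta-1}dp$. By Theorem 2.1 of Section 2, transported through $\phi_0$, the existence of $F_\alpha(s)$ lets me assume that $f\circ\phi_0^{-1}$ is of exponential order. Then $g\circ\phi_0^{-1}$ is also of exponential order, so the factor $e^{-s t^{\alpha}/\alpha}$ dominates.

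\textbf{Step 3: recognise the remaining integral.} I rewrite the power as $t^{\beta-1}=t^{\alpha-1}\cdot t^{\beta-\alpha}$. The remaining integral is then, by definition, the conformable transform of a power of $t$ times $f$, giving $\frac1s\,\mathcal{L}_0^{\alpha}\{t^{\,\cdot\,}f(t)\}$ with the exponent read off from this factorisation.

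\textbf{Main obstacle: the exponent.} The step I expect to be delicate is this exponent bookkeeping. The factorisation in Step 3 naturally produces $t^{\beta-\alpha}$, whereas the statement as printed has $t^{\alpha-\beta}$.

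The two forms agree exactly in the case $\alpha=\beta$. In that case both reduce to $\mathcal{L}_0^\alpha\{g\}=\frac1s F_\alpha(s)$, which is the inverse of Corollary (3). So I would first prove the result cleanly for $\alpha=\beta$.

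For $\alpha\neq\beta$ I would then test the printed form on $f\equiv1$. Here
\[
g(t)=\frac{t^\beta}{\beta},
\]
and both sides can be computed with Gamma functions via the substitution $u=t^\alpha/\alpha$. This check decides which sign of the exponent is the one actually attained. If the check gives $t^{\beta-\alpha}$, then the argument above proves the printed statement only for $\alpha=\beta$, and the printed exponent $\alpha-\beta$ must be read as $\beta-\alpha$ for the identity to hold when $\alpha\neq\beta$.
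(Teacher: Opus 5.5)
Your computation is correct, and so is your suspicion about the exponent. The statement as printed is false whenever $\alpha\neq\beta$. What you prove, $\mathcal{L}_0^{\alpha}\{g(t)\}=\frac{1}{s}\mathcal{L}_0^{\alpha}\{t^{\beta-\alpha}f(t)\}$, is the correct identity.

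The paper's own proof reaches exactly your intermediate expression $\frac{1}{s}\int_0^\infty e^{-s\frac{p^\alpha}{\alpha}}f(p)p^{\beta-1}\,dp$. It writes the same factorisation $p^{\beta-1}=p^{\alpha-1}p^{\beta-\alpha}$, and then misreads the result as $\mathcal{L}_0^{\alpha}\{t^{\alpha-\beta}f(t)\}$. The slip carries over to parts (i) and (ii) of the following corollary; in (ii) the factor should be $t^{1-\alpha}$.

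So you do not need to hedge: Step 3 already fixes the sign, and your test case shows the printed version genuinely fails. Take $f\equiv1$, so $g(t)=t^\beta/\beta$, and use $\mathcal{L}_0^{\alpha}\{t^{\gamma}\}=\alpha^{\gamma/\alpha}\Gamma(1+\gamma/\alpha)s^{-1-\gamma/\alpha}$. The left side is $\alpha^{\beta/\alpha-1}\Gamma(\beta/\alpha)s^{-1-\beta/\alpha}$, which equals $\frac1s\mathcal{L}_0^{\alpha}\{t^{\beta-\alpha}\}$. By contrast, $\frac1s\mathcal{L}_0^{\alpha}\{t^{\alpha-\beta}\}$ is a constant multiple of $s^{-3+\beta/\alpha}$, so the powers of $s$ agree only when $\beta=\alpha$. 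A quicker check: at $\alpha=1$ the printed formula would give $\frac1s\mathcal{L}\{t^{1-\beta}f\}$ instead of the classical $\frac1s\mathcal{L}\{t^{\beta-1}f\}$.

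Your route also differs from the paper's. The paper does not integrate by parts. It substitutes the definition of $g$, interchanges the order of integration, and evaluates $\int_p^\infty e^{-s\frac{t^\alpha}{\alpha}}t^{\alpha-1}\,dt=\frac1s e^{-s\frac{p^\alpha}{\alpha}}$ via $u=t^\alpha/\alpha$. That argument has no boundary term. Its only analytic input, left unstated in the paper, is Tonelli: absolute convergence of $\int_0^\infty e^{-\sigma\frac{p^\alpha}{\alpha}}|f(p)|p^{\beta-1}\,dp$ for $\sigma=\operatorname{Re}s>0$.

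Your integration by parts is equivalent, but it moves the burden to the boundary term at infinity, and there your justification overreaches. Existence of $F_\alpha(s)$ does not imply that $f\circ\phi_0^{-1}$ is of exponential order; some functions have a Laplace transform without being of exponential order. You would have to add that as a hypothesis.

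It is cleaner to use the same absolute convergence. If it holds at some $\sigma_0\ge0$, then for $\operatorname{Re}s=\sigma>\sigma_0$ you get $|g(t)|e^{-\sigma\frac{t^\alpha}{\alpha}}\le e^{-(\sigma-\sigma_0)\frac{t^\alpha}{\alpha}}\int_0^t e^{-\sigma_0\frac{p^\alpha}{\alpha}}|f(p)|p^{\beta-1}\,dp\to0$. Note that the hypothesis actually needed concerns $t^{\beta-\alpha}f$, not $f$ itself. Near $0$ it is supplied by $g$ being defined. Near $\infty$ it follows from absolute convergence of $F_\alpha$, after increasing $\sigma$ slightly when $\beta>\alpha$.
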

\begin{proof}
  By the definition of the conformable fractional Laplace transform,
\[
\mathcal{L}_{o}^{\alpha}\{g(t)\}
=
\int_{0}^{\infty} e^{-s\frac{t^{\alpha}}{\alpha}} g(t)t^{\alpha-1}\,dt.
\]
Substituting the definition of $g(t)$, we obtain
\[
\mathcal{L}_{o}^{\alpha}\{g(t)\}
=
\int_{0}^{\infty} e^{-s\frac{t^{\alpha}}{\alpha}}
\left(\int_{0}^{t} f(p)p^{\beta-1}\,dp\right)t^{\alpha-1}\,dt.
\]
Interchanging the order of integration, we get
\[
\mathcal{L}_{o}^{\alpha}\{g(t)\}
=
\int_{0}^{\infty} f(p)p^{\beta-1}
\left[
\int_{p}^{\infty} e^{-s\frac{t^{\alpha}}{\alpha}}t^{\alpha-1}\,dt
\right]dp.
\]
Let
\[
u=\frac{t^{\alpha}}{\alpha}, \qquad du=t^{\alpha-1}dt.
\]
Then
\[
\mathcal{L}_{o}^{\alpha}\{g(t)\}
=
\int_{0}^{\infty} f(p)p^{\beta-1}
\left[
\int_{\frac{p^{\alpha}}{\alpha}}^{\infty} e^{-su}\,du
\right]dp.
\]
Evaluating the inner integral,
\[
\int_{\frac{p^{\alpha}}{\alpha}}^{\infty} e^{-su}\,du
=\frac{1}{s}e^{-s\frac{p^{\alpha}}{\alpha}}.
\]
Substituting this result, we obtain
\[
\mathcal{L}_{o}^{\alpha}\{g(t)\}
= \frac{1}{s}\int_{0}^{\infty} e^{-s\frac{p^{\alpha}}{\alpha}} f(p)p^{\beta-1}\,dp.
\]
Now write
\[
p^{\beta-1}=p^{\alpha-1}p^{\beta-\alpha}.
\]
Hence,
\[
\mathcal{L}_{o}^{\alpha}\{g(t)\}
=
\frac{1}{s}\int_{0}^{\infty} e^{-s\frac{p^{\alpha}}{\alpha}} f(p)p^{\alpha-1}p^{\beta-\alpha}\,dp.
\]
By the definition of the conformable Laplace transform,
\[
\int_{0}^{\infty} e^{-s\frac{p^{\alpha}}{\alpha}} f(p)p^{\alpha-1}p^{\beta-\alpha}\,dp
= \mathcal{L}_{o}^{\alpha}\{t^{\alpha-\beta}f(t)\}.
\]
Therefore,
\[
\mathcal{L}_{o}^{\alpha}\{g(t)\}=\frac{1}{s}\mathcal{L}_{o}^{\alpha}\{t^{\alpha-\beta}f(t)\}.
\]
This completes the proof.
\end{proof}
\begin{corollary}

Let the conformable fractional integral of a function $f(t)$ be defined by\ \
$
g(t)=\int_{0}^{t} f(p)p^{\beta-1}\,dp, \ \ 0<\beta\le1,
$
and suppose that the conformable fractional Laplace transform of $f(t)$ exists and is given by\ \
$
\mathcal{L}_{o}^{\alpha}\{f(t)\}=F_{\alpha}(s), \ \ 0<\alpha\le1.
$
Then the result obtained in Theorem~\ref{thm:3.4} yields several important special cases depending on the values of $\alpha$ and $\beta$.
\begin{itemize}
\item[(i)] If $0<\alpha<1$ and $0<\beta<1$, then
\[
\mathcal{L}_{o}^{\alpha}\left\{\int_{0}^{t} f(p)p^{\beta-1}\,dp\right\}
=\frac{1}{s}\mathcal{L}_{o}^{\alpha}\{t^{\alpha-\beta}f(t)\}.
\]
\item[(ii)] If $0<\alpha<1$ and $\beta=1$, then
\[
\mathcal{L}_{o}^{\alpha}\left\{\int_{0}^{t} f(p)\,dp\right\}
=
\frac{1}{s}\mathcal{L}_{o}^{\alpha}\{t^{\alpha-1}f(t)\}.
\]
\item[(iii)] If $\alpha=\beta$, then
\[
\mathcal{L}_{o}^{\alpha}\left\{\int_{0}^{t} f(p)p^{\alpha-1}\,dp\right\}
=
\frac{1}{s}\mathcal{L}_{o}^{\alpha}\{f(t)\}.
\]
Thus, the obtained result reduces to a simpler form analogous to the classical Laplace transform property of integration.
\item[(iv)] If $\alpha=1$ and $\beta=1$, then the conformable Laplace transform reduces to the classical Laplace transform, and we obtain
\[
\mathcal{L}\left\{\int_{0}^{t} f(p)\,dp\right\}
=\frac{F(s)}{s}.
\]
\end{itemize}
\end{corollary}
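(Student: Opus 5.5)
The corollary is a direct specialization of Theorem~\ref{thm:3.4}. The plan is to substitute the stated values of $\alpha$ and $\beta$ into
\[
\mathcal{L}_{o}^{\alpha}\{g(t)\}=\frac{1}{s}\mathcal{L}_{o}^{\alpha}\{t^{\alpha-\beta}f(t)\}
\]
and simplify the power $t^{\alpha-\beta}$. Before doing so, I will check that the hypotheses of Theorem~\ref{thm:3.4} hold in each case: $0<\beta\le 1$, $0<\alpha\le 1$, and existence of $F_\alpha(s)$. These are assumed verbatim in the corollary, so the theorem applies throughout. I will also record that the interchange of integration in its proof needs $\mathcal{L}_o^\alpha\{t^{\alpha-\beta}f(t)\}$ to converge. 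Since $\operatorname{Re}(s)>0$ is needed for the inner integral $\int_{p^\alpha/\alpha}^\infty e^{-su}\,du=\frac1s e^{-sp^\alpha/\alpha}$, I will work in that half-plane.

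\textbf{Case (i).} This is literally the statement of Theorem~\ref{thm:3.4} restricted to $0<\alpha,\beta<1$, so nothing further is needed.

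\textbf{Case (ii).} Putting $\beta=1$ gives $p^{\beta-1}=1$, so $g(t)=\int_0^t f(p)\,dp$, and the exponent becomes $\alpha-1$.

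\textbf{Case (iii).} Putting $\beta=\alpha$ makes $t^{\alpha-\beta}=1$, which yields $\frac1s F_\alpha(s)$. I will add a remark that by Theorem~\ref{thm:2.1} (the change of variable $u=\phi_0(t)$) this is exactly the classical rule $\mathcal{L}\{\int_0^u h\}=H(s)/s$ applied to $h=f\circ\phi_0^{-1}$. Indeed,
\[
\int_0^t f(p)p^{\alpha-1}\,dp=\int_0^{\phi_0(t)} f(\phi_0^{-1}(v))\,dv,
\]
which justifies the word ``analogous''.

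\textbf{Case (iv).} With $\alpha=\beta=1$, the kernel $e^{-st^\alpha/\alpha}t^{\alpha-1}$ becomes $e^{-st}$. Hence $\mathcal{L}_0^1=\mathcal{L}$ and $F_1=F$, and we recover $\mathcal{L}\{\int_0^t f\}=F(s)/s$.

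There is no genuine obstacle here. The only point deserving care is the one just mentioned in case (ii): the transform $\mathcal{L}_o^\alpha\{t^{\alpha-1}f(t)\}$ must exist. For instance, if $f(0)\neq 0$ this amounts to integrability of $t^{2\alpha-2}$ near $0$, i.e.\ $\alpha>1/2$. I would state this as an implicit existence hypothesis inherited from Theorem~\ref{thm:3.4} rather than attempt to remove it.
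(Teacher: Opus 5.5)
Your route is the paper's: the corollary is just Theorem~\ref{thm:3.4} with $\alpha,\beta$ specialized, and your arguments for (iii) and (iv) are fine. But the existence issue you flag in (ii) is not a harmless implicit hypothesis. It is a symptom of an exponent error in Theorem~\ref{thm:3.4} itself, and that error makes (i) and (ii) false whenever $\beta\neq\alpha$. In the last step of that theorem's proof, writing $p^{\beta-1}=p^{\alpha-1}p^{\beta-\alpha}$ gives
\[
\frac{1}{s}\int_0^\infty e^{-s\frac{p^\alpha}{\alpha}}\bigl(p^{\beta-\alpha}f(p)\bigr)p^{\alpha-1}\,dp=\frac{1}{s}\mathcal{L}_0^{\alpha}\{t^{\beta-\alpha}f(t)\},
\]
not $\frac1s\mathcal{L}_0^{\alpha}\{t^{\alpha-\beta}f(t)\}$. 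Your own substitution from (iii) catches this if you run it for general $\beta$. With $v=p^\alpha/\alpha$ one has $p^{\beta-1}\,dp=p^{\beta-\alpha}\,dv$, so $g(\phi_0^{-1}(u))=\int_0^u k(v)\,dv$ with $k=(t^{\beta-\alpha}f)\circ\phi_0^{-1}$. The classical integration rule then gives $\mathcal{L}_0^{\alpha}\{g\}=\frac1s\mathcal{L}_0^{\alpha}\{t^{\beta-\alpha}f(t)\}$.

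To see that the stated versions genuinely fail, use $\mathcal{L}_0^{\alpha}\{t^\gamma\}(s)=\alpha^{\gamma/\alpha}\Gamma(1+\gamma/\alpha)\,s^{-1-\gamma/\alpha}$ for $\gamma>-\alpha$. With $f\equiv1$ one has $g(t)=t^\beta/\beta$, so the left side is a multiple of $s^{-1-\beta/\alpha}$. The stated right side $\frac1s\mathcal{L}_0^{\alpha}\{t^{\alpha-\beta}\}$, when it exists (i.e.\ $\beta<2\alpha$), is a multiple of $s^{-3+\beta/\alpha}$. These coincide only for $\beta=\alpha$.

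Concretely, take (ii) with $\alpha=2/3$. Both sides converge, yet the left side is $(2/3)^{3/2}\Gamma(5/2)\,s^{-5/2}$ and the right side is $(2/3)^{-1/2}\sqrt{\pi}\,s^{-3/2}$. For $\alpha\le 1/2$ the left side $\mathcal{L}_0^{\alpha}\{t\}$ is finite while the right side diverges, so no existence assumption can rescue the identity.

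The correct special cases are $\frac1s\mathcal{L}_0^{\alpha}\{t^{\beta-\alpha}f(t)\}$ in (i) and $\frac1s\mathcal{L}_0^{\alpha}\{t^{1-\alpha}f(t)\}$ in (ii). The latter has no singularity at $t=0$ for bounded $f$. Items (iii) and (iv) survive because the exponent vanishes.

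The same sign slip affects your remark about the Fubini step. The integrability actually needed is $\int_0^\infty e^{-\sigma p^\alpha/\alpha}|f(p)|p^{\beta-1}\,dp<\infty$ with $\sigma=\operatorname{Re}(s)>0$. That is absolute convergence of $\mathcal{L}_0^{\alpha}\{t^{\beta-\alpha}f(t)\}$, not of $\mathcal{L}_0^{\alpha}\{t^{\alpha-\beta}f(t)\}$.
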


And now that we have obtained the conformable fractional Laplace transform relations for both derivatives and integrals, we proceed to consider its limiting properties. Specifically, the Initial and Final Value Theorems help in explaining how the function behaves around the origin and as it approaches infinity.
\begin{theorem}[Initial Value Theorem]
   Let the conformable fractional Laplace transform of 
$f(t)$ be given by 
\( \mathcal{L}_0^{\alpha}\{f(t)\} = F_{\alpha}(s), \quad 0 < \alpha \leq 1.
\)
Then,
\[
\lim_{t \to a^+} f(t) = \lim_{s \to \infty} s F_{\alpha}(s).
\]
\end{theorem}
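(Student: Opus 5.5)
The plan is to reduce the statement to the classical initial value theorem through the characterization in Theorem~\ref{thm:2.1}. We read $t\to a^+$ as $t\to 0^+$, since we have agreed to work with $\mathcal{L}_0^\alpha$. Put $\phi_0(t)=t^\alpha/\alpha$ and $h=f\circ\phi_0^{-1}$, so that $h(u)=f\bigl((\alpha u)^{1/\alpha}\bigr)$. Then $F_\alpha(s)=\mathcal{L}(h)(s)$.

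Because $\phi_0^{-1}$ is a continuous increasing bijection of $[0,\infty)$ with $\phi_0^{-1}(u)\to0^+$ as $u\to0^+$, we have
\[
\lim_{t\to0^+}f(t)=\lim_{u\to0^+}h(u),
\]
and either limit exists exactly when the other does. It therefore suffices to prove $\lim_{s\to\infty}s\,\mathcal{L}(h)(s)=h(0^+)$.

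This could be deduced from the classical initial value theorem for $h$. That route, however, usually assumes $h'$ is of exponential order, and this would mean imposing conditions on $T_\alpha f$ through part (3) of the corollary above, namely
\[
\mathcal{L}_0^\alpha\{T_\alpha f\}=sF_\alpha(s)-f(0).
\]
Taking $s\to\infty$ in that identity is the most natural paper-style argument. One needs $\mathcal{L}_0^\alpha\{T_\alpha f\}(s)\to0$, which follows from the Riemann–Lebesgue-type decay of $\mathcal{L}(h')(s)$ as $s\to\infty$, since $T_\alpha f\circ\phi_0^{-1}=h'$.

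To avoid derivative hypotheses, I would instead argue directly. Assume $f$ is locally integrable and $F_\alpha(s)$ converges absolutely for some $s_0$. Let $L=h(0^+)$. Since $s\int_0^\infty e^{-su}\,du=1$, we get
\[
sF_\alpha(s)-L=s\int_0^\infty e^{-su}\bigl(h(u)-L\bigr)\,du .
\]
Given $\varepsilon>0$, choose $\delta$ with $|h(u)-L|<\varepsilon$ on $(0,\delta)$. The integral over $(0,\delta)$ is then bounded by $\varepsilon$. The tail is bounded by
\[
s\,e^{-(s-s_0)\delta}\int_\delta^\infty e^{-s_0u}|h(u)-L|\,du,
\]
which tends to $0$ as $s\to\infty$.

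The main obstacle is this tail estimate, which needs absolute convergence or exponential order. With mere conditional convergence, I would first integrate by parts using $H(u)=\int_0^u e^{-s_0v}h(v)\,dv$, which is bounded, and bound the tail in the same way.
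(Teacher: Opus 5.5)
Your reduction is the paper's own: you set $h=f\circ\phi_0^{-1}$, observe that $F_\alpha=\mathcal{L}(h)$, and note that $\lim_{t\to0^+}f(t)=\lim_{u\to0^+}h(u)$ because $\phi_0^{-1}$ is an increasing homeomorphism of $[0,\infty)$ fixing $0$. You also read $t\to a^+$ as $t\to0^+$, which is what the paper's proof does.

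The difference is the last step. The paper just invokes ``the initial value theorem of $\mathcal{L}$'' and never states its hypotheses. You instead prove the needed Abelian statement for $h$ directly. You split at $\delta$, use $s\int_0^\delta e^{-su}\,du\le 1$ near the origin, and kill the tail with $s\,e^{-(s-s_0)\delta}\to0$. In the conditionally convergent case you integrate by parts against the bounded primitive $H$.

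Your route buys a precise and weak set of assumptions: existence of $f(0^+)$ plus convergence of $F_\alpha$ at one real point. It imposes no condition on $T_\alpha f$. The usual textbook form of the classical theorem does require hypotheses on $h'$, which is $(T_\alpha f)\circ\phi_0^{-1}$, as you observed. Your version also makes clear that the result holds only in the Abelian direction: one must assume the left-hand limit exists, whereas the theorem as printed carries no hypotheses at all. The paper's argument is shorter, but it inherits unspecified hypotheses from the cited classical result.

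One small detail in your tail bound: take $s_0>0$. This is allowed, since absolute convergence at $s_0$ persists for every real $s>s_0$. Otherwise $\int_\delta^\infty e^{-s_0u}|L|\,du$ need not be finite.
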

\begin{proof}
Let $\phi_0(t)=\frac{t^{\alpha}}{\alpha}$ and $h(t)=f(\phi_0^{-1}(t))$. Then 
\[ H(s)=\mathcal{L}(h(t))=\mathcal L_0^{\alpha}(f(t))(s)=F_{\alpha}(s)\]
Hence, by the initial value theorem of $\mathcal L$,
\[
\lim_{t \to 0^+} f(t)=\lim_{t \to 0^+} h(t) = \lim_{s \to \infty} s H(s)=\lim_{s \to \infty} s F_{\alpha}(s),
\]
Note that since \( t \to 0^+ \), \(\phi_0^{-1}(t) \to 0,\) and so
$\displaystyle
\lim_{t \to 0^+} h(t) = \lim_{t \to 0^+} f(t).
$
\end{proof}
\begin{theorem}[Final Value Theorem]
   Let the conformable fractional Laplace transform of a function 
$
f(t)$ be
\(
\mathcal{L}_0^{\alpha}\{f(t)\} = F_{\alpha}(s), \quad 0 < \alpha \leq 1.
\)
Then,
\[
\lim_{t \to \infty} f(t) = \lim_{s \to 0} s F_{\alpha}(s).
\]
\end{theorem}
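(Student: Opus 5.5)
The plan is to copy the proof of the Initial Value Theorem: reduce the statement to the classical Final Value Theorem through the characterization in Theorem~\ref{thm:2.1}. Put $\phi_0(t)=\frac{t^{\alpha}}{\alpha}$ and $h(t)=f(\phi_0^{-1}(t))$, where $\phi_0^{-1}(t)=(\alpha t)^{1/\alpha}$. The change of variables $u=\phi_0(t)$ gives
\[
H(s)=\mathcal{L}(h(t))(s)=\mathcal{L}_0^{\alpha}(f(t))(s)=F_{\alpha}(s)
\]
for every $s$ in the common half-plane of convergence.

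Next, apply the classical Final Value Theorem to $h$. This gives $\lim_{t\to\infty}h(t)=\lim_{s\to 0}sH(s)$, provided the limit on the left exists, which we take as the implicit hypothesis just as in the classical theorem. The hypotheses needed there should be stated explicitly: $h$ is locally integrable with $H(s)$ defined for all $\operatorname{Re}(s)>0$, and $\lim_{t\to\infty}h(t)$ exists. Equivalently, one can assume $h'$ is Laplace transformable and $sH(s)$ is analytic on $\operatorname{Re}(s)\ge 0$. Each hypothesis translates into one on $f$. For example, $\lim_{t\to\infty}h(t)$ exists iff $\lim_{t\to\infty}f(t)$ exists, and $H$ converging for $\operatorname{Re}(s)>0$ is the same as $F_{\alpha}$ converging there.

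The last step is the identity
\[
\lim_{t\to\infty}f(t)=\lim_{t\to\infty}h(t).
\]
It holds because $\phi_0^{-1}$ is a continuous increasing bijection of $[0,\infty)$ onto itself with $\phi_0^{-1}(t)\to\infty$ as $t\to\infty$. Combining this with the previous step gives $\lim_{t\to\infty}f(t)=\lim_{s\to0}sH(s)=\lim_{s\to0}sF_{\alpha}(s)$.

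The only real obstacle is justifying the classical theorem, which is false without a hypothesis. For instance, $f(t)=\sin(t^{\alpha}/\alpha)$ gives $sF_{\alpha}(s)=\frac{s}{s^{2}+1}\to0$, yet $f$ has no limit. So I would state the result under the standing assumption that $\lim_{t\to\infty}f(t)$ exists and $F_{\alpha}(s)$ exists for all real $s>0$. For completeness, I would sketch the classical argument for $h$ with $L=\lim_{t\to\infty}h(t)$. Write
\[
sH(s)-L=s\int_0^{\infty}e^{-st}\bigl(h(t)-L\bigr)\,dt,
\]
split the integral at a large $T$ where $|h-L|<\varepsilon$ on $(T,\infty)$, and let $s\to0^{+}$. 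The tail contributes at most $\varepsilon$ in absolute value, and the integral over $[0,T]$, multiplied by $s$, tends to $0$.
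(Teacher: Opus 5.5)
Your argument is the paper's proof. You set $h=f\circ\phi_0^{-1}$, use the change of variables to get $\mathcal{L}(h)=F_{\alpha}$, invoke the classical final value theorem, and transfer the limit back because $\phi_0^{-1}(t)\to\infty$. Your explicit hypothesis that $\lim_{t\to\infty}f(t)$ exists (with $h$ locally integrable) is something the paper omits but genuinely needs, as your example $f(t)=\sin(t^{\alpha}/\alpha)$ shows.

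One caution: drop the ``equivalently'' alternative ($h'$ transformable and $sH(s)$ analytic on $\operatorname{Re}(s)\ge 0$). It is neither equivalent nor sufficient outside the rational case. For $h(t)=-\cos(e^{t})$, i.e.\ $f(t)=-\cos(e^{t^{\alpha}/\alpha})$, $h'$ is transformable and $sH(s)$ extends to an entire function, yet $h$ has no limit. Keep the Abelian hypothesis you finally settle on.
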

\begin{proof}
Let $\phi_0(t)=\frac{t^{\alpha}}{\alpha}$ and $h(t)=f(\phi_0^{-1}(t))$. Then 
\[ H(s)=\mathcal{L}(h(t))=\mathcal L_0^{\alpha}(f(t))(s)=F_{\alpha}(s)\]
Hence, by the final value theorem of $\mathcal L$,
\[
\lim_{t \to \infty} f(t)=\lim_{t \to \infty} h(t) = \lim_{s \to 0} s H(s)=\lim_{s \to 0} s F_{\alpha}(s),
\]
Note that since \( t \to \infty \), \(\phi_0^{-1}(t) \to \infty,\) and so
$\displaystyle
\lim_{t \to \infty} h(t) = \lim_{t \to \infty} f(t).
$
\end{proof}
\section{Inversion Theorem }
The conformable Laplace transform is interesting in that it has an inversion theorem, giving it the capability to reconstruct the original function in terms of its transform. Consequently, the approach is fully developed and is reliable when used in solving conformable fractional differential equations.
\begin{theorem} Let $f:[0,\infty)\rightarrow \mathbb C$ be a function whose classical Laplace transform $F(s)=\mathcal L(f(t))(s)$ exists for all $s\in \mathbb C$ such that $\mathcal R(s)>a$, for some $a\in\mathbb R$.
  Let its  conformable fractional Laplace transform  be
$
F_{\alpha}(s) = \mathcal{L}_{\alpha}\{f(t)\}(s),
$ then the inverse conformable fractional Laplace transform is expressed as
\[
f(t) = \mathcal{L}_{\alpha}^{-1}\{F_{\alpha}(s)\} = \mathcal{L}^{-1}\{F_{\alpha}(s)\}\bigg|_{u=\phi_0(t)},\quad\phi_0(t) = \frac{t^\alpha}{\alpha}
\]
where \(\mathcal{L}^{-1}\) denotes the classical inverse Laplace transform.
\end{theorem}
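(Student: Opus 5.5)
The plan is to reduce everything to the classical Laplace transform through the characterization in Theorem~\ref{thm:2.1}, namely $\mathcal{L}_0^{\alpha}(f(t))(s)=\mathcal{L}\big((f\circ\phi_0^{-1})(u)\big)(s)$ with $\phi_0(t)=t^{\alpha}/\alpha$. Set $h(u)=f(\phi_0^{-1}(u))=f\big((\alpha u)^{1/\alpha}\big)$ for $u\ge 0$. We have $\phi_0(0)=0$ and $\phi_0(t)\to\infty$ as $t\to\infty$, and $\phi_0$ is a continuous strictly increasing bijection of $[0,\infty)$ onto itself. By Theorem~\ref{thm:2.1} the conformable transform $F_\alpha(s)$ exists on a half-plane $\operatorname{Re}(s)>b$ exactly when $\mathcal{L}(h)$ does, and then $F_\alpha(s)=H(s):=\mathcal{L}(h(u))(s)$ there.

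\textbf{Step 1 (the transform relation).} The paper assumes that $\mathcal{L}(f)$ exists. What the argument really needs is that $\mathcal{L}(h)$ exists on some half-plane, equivalently that $F_\alpha$ exists. I would interpret the hypothesis in that sense, and state it explicitly for $h$ if necessary. Under that reading the identity
\[
F_\alpha(s)=\mathcal{L}\big(h(u)\big)(s),\qquad \operatorname{Re}(s)>b,
\]
holds.

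\textbf{Step 2 (apply classical inversion).} Since $F_\alpha=\mathcal{L}(h)$ on a half-plane, classical inversion gives $h(u)=\mathcal{L}^{-1}\{F_\alpha(s)\}(u)$. Concretely, this is the Bromwich integral
\[
h(u)=\frac{1}{2\pi i}\int_{c-i\infty}^{c+i\infty}e^{su}F_\alpha(s)\,ds,\qquad c>b.
\]
It holds at every point where $h$ is continuous and locally of bounded variation, and in general in the uniqueness sense. The uniqueness half is exactly part (7) of the preceding theorem, which I would invoke.

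\textbf{Step 3 (undo the change of variable).} Since $\phi_0$ is a bijection, $f(t)=h(\phi_0(t))$ for every $t\ge0$. Hence
\[
f(t)=\mathcal{L}^{-1}\{F_\alpha(s)\}(u)\Big|_{u=\phi_0(t)}=\frac{1}{2\pi i}\int_{c-i\infty}^{c+i\infty}e^{s t^{\alpha}/\alpha}F_\alpha(s)\,ds .
\]
The regularity is preserved under composition with $\phi_0$: $\phi_0$ is a homeomorphism, smooth on $(0,\infty)$, and monotone, so it maps intervals of bounded variation to such intervals. Therefore the pointwise validity of the inversion for $h$ at $u$ transfers to $f$ at $t=\phi_0^{-1}(u)$.

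\textbf{Main obstacle.} The only delicate point is matching the hypotheses correctly, not the computation itself. Existence of $\mathcal{L}(f)$ neither implies nor is implied by existence of $\mathcal{L}(h)$; the example $e^{t^\beta}$ discussed after Theorem~\ref{thm:2.1} shows this. So I would state the inversion under the hypothesis that $F_\alpha$ exists on a half-plane. I would also note that the formula recovers $f$ at points of continuity, or almost everywhere, as in the classical theorem.
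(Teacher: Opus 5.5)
Your argument is the paper's proof: set $h=f\circ\phi_0^{-1}$, identify $F_\alpha=\mathcal{L}(h)$ through the change of variables $u=t^\alpha/\alpha$, apply classical inversion to $h$, and compose with $\phi_0$. Your point that the hypothesis should be existence of $F_\alpha$ (equivalently of $\mathcal{L}(h)$) on a half-plane, rather than of $\mathcal{L}(f)$, correctly sharpens the statement, though the $e^{t^{\beta}}$ example only shows that existence of $\mathcal{L}(f)$ does not imply existence of $F_\alpha$, not the reverse non-implication you also assert.
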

\begin{proof}
Let \[\phi_0(t)=\frac{t^{\alpha}}{\alpha}, \quad t\in[0, \infty), \quad 0<\alpha\leq 1.\]
Define \[ h(u) = f(\phi_0^{-1}(u)) \quad \text{for} \quad u\in[0,\infty).\]
Hence \[f(t)=h(\phi_0(t))\quad \text{for} \quad u\in[0,\infty).\]
Then 
\[F_\alpha(s) =\mathcal{L}_\alpha \{ f(t) \}(s) =\mathcal{L}\{ f(\phi_0^{-1}(t))= \mathcal{L}\{ h(t)\}(s).\]
From inversion theorem of $\mathcal L$,
\[
h(t) = \mathcal{L}^{-1}\{ F_\alpha(s) \}(t).
\]
Hence 
\[
f(t) = h(\phi_0(t)) = \left. \mathcal{L}^{-1}\{ F_\alpha(s) \} \right|_{u = \frac{t^\alpha}{\alpha}}.
\] 
where \(\mathcal{L}^{-1}\) denotes the classical inverse Laplace transform.
\end{proof}
The following examples illustrate the fractional inverse Laplace transform.\\
$
\displaystyle \mathcal{L}^{-1}_{\alpha}\!\left(\frac{1}{s}\right) = 1,
\quad\displaystyle \mathcal{L}^{-1}_{\alpha}\!\left(\frac{1}{s-\lambda}\right) = e^{\lambda \frac{ t^{\alpha}}{\alpha}},\quad
\displaystyle\mathcal{L}^{-1}_{\alpha}\!\left(\frac{1}{s+\lambda}\right) = e^{-\lambda \frac{ t^{\alpha}}{\alpha}},\quad 
\displaystyle\mathcal{L}^{-1}_{\alpha}\!\left(\frac{1}{s^2+\lambda^2}\right) = \sin\!\left( \lambda \frac{ t^\alpha}{\alpha} \right),
\quad \displaystyle\mathcal{L}^{-1}_{\alpha}\!\left(\frac{\lambda}{s^2+\lambda^2}\right) = \cos\!\left( \lambda \frac{ t^\alpha}{\alpha} \right).  
$
\begin{theorem}[Complex Inversion Formula for Conformable Laplace Transform] 
\label{thm:4.2}
    Let 
$
f: [0,\infty) \to \mathbb{R}
$
be piecewise continuous and of conformable exponential order \(a\).
Suppose
$
\mathcal{L}_0^\alpha \{ f(t) \} = F_{\alpha}(s) = \int_0^\infty e^{-s\frac{t^\alpha}{\alpha}} f(t) \, t^{\alpha-1} \, dt, \quad \operatorname{Re}(s) > a.
$
Then for every \(c > a\) and \(t > 0\),
\[
f(t) = \frac{1}{2\pi i} \int_{c-i\infty}^{c+i\infty} e^{s\frac{t^\alpha}{\alpha}} F_{\alpha}(s) \, ds.
\]
\end{theorem}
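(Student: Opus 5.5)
We reduce the statement to the classical Bromwich inversion formula through the change of variables of Theorem~\ref{thm:2.1}, as in the earlier inversion theorem. Put $\phi_0(t)=\frac{t^\alpha}{\alpha}$ and $h(u)=f(\phi_0^{-1}(u))=f\bigl((\alpha u)^{1/\alpha}\bigr)$ for $u\ge 0$. By the characterization theorem,
\[
F_\alpha(s)=\mathcal{L}_0^\alpha\{f(t)\}(s)=\mathcal{L}\{h(u)\}(s),\qquad \operatorname{Re}(s)>a .
\]
The proof will then verify the hypotheses of the classical complex inversion theorem for $h$, apply it, and substitute $u=\phi_0(t)$ back.

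\textbf{Transferring the hypotheses.} We read ``conformable exponential order $a$'' as $|f(t)|\le M e^{a\,t^\alpha/\alpha}$ for large $t$. This is exactly $|h(u)|\le Me^{au}$, so $h$ has classical exponential order $a$.

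Piecewise continuity also carries over. Since $\phi_0^{-1}$ is a continuous, strictly increasing bijection of $[0,\infty)$ onto itself, the jump discontinuities of $f$ in any bounded interval correspond bijectively to those of $h$ in a bounded interval. They remain finite jumps, and one-sided limits are preserved.

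For the Bromwich formula to return $h(u)$ pointwise, rather than only the average $\frac12\bigl(h(u^+)+h(u^-)\bigr)$, we need some local regularity. I would assume $f$ is piecewise smooth, or at least of bounded variation near each point. For $t>0$ the map $\phi_0$ is a $C^\infty$ diffeomorphism near $t$, so this regularity transfers to $h$ at $u=\phi_0(t)>0$. This is why the statement requires $t>0$: at $t=0$ the derivative $\phi_0'(t)=t^{\alpha-1}$ blows up.

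\textbf{Applying the classical formula.} For $c>a$ and $u>0$ at which $h$ is continuous, the classical inversion theorem gives
\[
h(u)=\frac{1}{2\pi i}\int_{c-i\infty}^{c+i\infty} e^{su}F_\alpha(s)\,ds,
\]
understood as the principal value $\lim_{T\to\infty}\int_{c-iT}^{c+iT}$. Substituting $u=\phi_0(t)=t^\alpha/\alpha$ and using $h(\phi_0(t))=f(t)$ yields the claimed formula. At points of discontinuity of $f$, the left side should be replaced by $\frac12\bigl(f(t^+)+f(t^-)\bigr)$, and I would note this caveat explicitly.

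\textbf{Main obstacle.} The only genuine difficulty is the precise sense in which the Bromwich integral converges and equals $h(u)$. For this I would quote the standard Fourier-inversion (Dirichlet--Jordan) argument applied to $e^{-cu}h(u)$, which is absolutely integrable because $c>a$. I would not reprove it. The bulk of the work is checking that the hypotheses on $f$ translate faithfully to $h$ under $\phi_0$, as done above.
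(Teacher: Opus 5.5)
Your proposal is correct and is essentially the paper's own argument: set $h=f\circ\phi_0^{-1}$, note that $F_\alpha=\mathcal{L}\{h\}$, apply the classical Bromwich formula, and substitute $u=t^\alpha/\alpha$. Your extra checks (transfer of exponential order, the need for local bounded variation, and the average value at jumps) supply hypotheses the paper assumes without stating, though the restriction $t>0$ comes from the classical formula itself, which returns $h(0^+)/2$ at $u=0$, not from the blow-up of $\phi_0'$ at $0$.
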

\begin{proof}
Let \[\phi_0(t)=\frac{t^{\alpha}}{\alpha}, \quad t\in[0, \infty), \quad 0<\alpha\leq 1.\]
Define \[ h(u) = f(\phi_0^{-1}(u)) \quad \text{for} \quad u\in[0,\infty).\]
Hence \[f(t)=h(\phi_0(t))\quad \text{for} \quad u\in[0,\infty).\]
Then 
\[F_\alpha(s) =\mathcal{L}_\alpha \{ f(t) \}(s) =\mathcal{L}\{ f(\phi_0^{-1}(t))= \mathcal{L}\{ h(t)\}(s).\]
From the complex inversion theorem of \(\mathcal{L}\),
\[
h(t) = \frac{1}{2\pi i} \int_{c-i\infty}^{c+i\infty} e^{su} F_\alpha(s) \, ds.
\] 
Hence 
\[
f(t) = h(\phi_0(t)) = \frac{1}{2\pi i} \int_{c-i\infty}^{c+i\infty} e^{s\frac{t^\alpha}{\alpha}} F_{\alpha}(s) \, ds.
\] 
Proof completed.  
\end{proof}
\begin{theorem}[Residue Form of the Conformable Inversion Formula]
    Let 
$
f: [0,\infty) \to \mathbb{R}
$
be piecewise continuous and of conformable exponential order \(a\). Suppose
$
\mathcal{L}_\alpha \{ f(t) \} = F_{\alpha}(s) $ in addition, that \(F_{\alpha}(s)\) admits a meromorphic extension to the half-plane \(\operatorname{Re}(s) > a\), also \(F_{\alpha}(s)\) satisfies the standard growth conditions required for contour deformation.
Then, for \(t > 0\),
\[
f(t) = \sum_{k} \operatorname{Res}\!\left( e^{s\frac{t^\alpha}{\alpha}} F_{\alpha}(s), \, s = z_k \right),
\]
where \(z_k\) are the poles of \(F_{\alpha}(s)\) in the half-plane \(\operatorname{Re}(s) > a\).
\end{theorem}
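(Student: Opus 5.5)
The plan is to start from the complex inversion formula of Theorem~\ref{thm:4.2} and evaluate the Bromwich integral by closing the contour to the left and applying the residue theorem. As in the earlier proofs, I put $u=\phi_0(t)=t^\alpha/\alpha$, which is positive for $t>0$. Then Theorem~\ref{thm:4.2} gives
\[
f(t)=\frac{1}{2\pi i}\int_{c-i\infty}^{c+i\infty} e^{su}F_\alpha(s)\,ds,
\]
with $F_\alpha=\mathcal L(h)$ and $h=f\circ\phi_0^{-1}$. In the variable $u$ this is exactly the classical residue inversion problem for $h$. So it suffices either to prove $h(u)=\sum_k \operatorname{Res}(e^{su}F_\alpha(s),z_k)$ and then substitute $u=t^\alpha/\alpha$, or to quote the classical residue form of the inverse Laplace transform applied to $h$.

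For a self-contained argument, fix $R>0$ and let $\Gamma_R$ be the closed contour made of the segment $[c-iR,c+iR]$ and the left arc $C_R=\{c+Re^{i\theta}:\pi/2\le\theta\le3\pi/2\}$. I would choose a sequence $R_n\to\infty$ so that no pole of $F_\alpha$ lies on $C_{R_n}$. By the residue theorem,
\[
\frac{1}{2\pi i}\oint_{\Gamma_{R_n}} e^{su}F_\alpha(s)\,ds=\sum_{|z_k-c|<R_n}\operatorname{Res}\bigl(e^{su}F_\alpha(s),z_k\bigr).
\]
As $n\to\infty$, the segment integral tends to the Bromwich integral, which equals $f(t)$. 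Provided the arc contribution vanishes, the right-hand side converges to the full residue sum.

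The main obstacle is the arc estimate $\int_{C_{R_n}}e^{su}F_\alpha(s)\,ds\to0$. This is where I interpret the ``standard growth conditions'': I assume $|F_\alpha(s)|\le M R^{-k}$ on $C_{R_n}$ for some $k>0$, or more generally $\sup_{C_{R_n}}|F_\alpha|\to0$. With that in hand, I would prove the arc estimate by Jordan's lemma. Since $u>0$, we have $|e^{su}|=e^{u\operatorname{Re}s}$, which is bounded by $e^{cu}$ and decays on the left part of the arc. The standard estimate $\int_{\pi/2}^{3\pi/2}e^{uR\cos\theta}R\,d\theta\le \pi/u$ then gives an arc integral of order $e^{cu}\pi\sup_{C_R}|F_\alpha|/u\to0$.

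Finally, I need to settle where the poles lie. Since $F_\alpha$ is analytic for $\operatorname{Re}s>a$ as a Laplace transform, closing to the left actually picks up the poles of the meromorphic continuation to the left of the line $\operatorname{Re}s=c$. I will therefore read the statement's $z_k$ as all poles of this continuation enclosed by the contours. The growth conditions ensure the residue series converges, being the limit of the partial sums. Substituting $u=t^\alpha/\alpha$ back then yields the stated formula.
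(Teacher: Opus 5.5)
Your proposal is correct and follows the same route as the paper: you start from the Bromwich representation of Theorem~\ref{thm:4.2}, close the contour to the left, and apply the residue theorem. Your Jordan-lemma arc estimate and your choice of radii $R_n$ supply the details that the paper leaves under ``appropriate growth bounds.'' You are also right to read the $z_k$ as the poles of the continuation enclosed by the left contours, not poles in $\operatorname{Re}s>a$, where $F_\alpha$ is analytic; this is what the paper's proof actually uses when it sums ``the residues of the integrand within the contour.''
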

\begin{proof}
    From Theorem~\ref{thm:4.2}, we have the inversion representation
\[
f(t) = \frac{1}{2\pi i} \int_{c-i\infty}^{c+i\infty} e^{s\frac{t^\alpha}{\alpha}} F_{\alpha}(s) \, ds, \quad c > a.
\]
Under the additional assumption that \(F_{\alpha}(s)\) is meromorphic and of appropriate growth bounds, it is possible to take the contour of integration to be closed to the left in the complex plane. Under the classical residue theorem, the Bromwich integral value is equal to the sum of the residues of the integrand within the contour. Hence,
\[
f(t) = \sum_{k} \operatorname{Res}\!\left( e^{s\frac{t^\alpha}{\alpha}} F_{\alpha}(s), \, s = z_k \right).
\]
\end{proof}
\begin{example}
    If 
    $
F_{\alpha}(s) = \frac{1}{s - a},
$
then \(F_{\alpha}(s)\) has a simple pole at \(s = a\). The residue is
\[
\operatorname{Res}\!\left( e^{s\frac{t^\alpha}{\alpha}} F_{\alpha}(s), \, s = a \right) = e^{a\frac{t^\alpha}{\alpha.}} 
, \quad\Rightarrow\quad f(t) = e^{a\frac{t^\alpha}{\alpha}}. 
\]
\end{example}
\section{Convolution }
We prove the convolution theorem that corresponds with the conformable Laplace transform, which is actually a convenient device to use when manipulating products in the transform domain.
\begin{definition}
     Let \( f, g : [0, \infty) \to \mathbb{R} \) be functions. The comfortable fractional convolution of 
\( f \) and \( g \), denoted by \( *_{\alpha} \), is defined as
\[
(f *_{\alpha} g)(t) = \int_{0}^{t} f(p) \; g\!\left( \big( t^{\alpha} - p^{\alpha}\big)^{1/\alpha} \right) p^{\alpha-1} \, dp \tag{5.1}
\]
where \(*_{\alpha}\) denotes the fractional convolution.
\end{definition}
\begin{theorem}
The fractional convolution operator \( *_{\alpha} \) satisfies the following properties:
\begin{enumerate}
    \item 
    $ f *_{\alpha} g = g *_{\alpha} f  \quad\text{(commutativity)}; $
     \item
    $
    f *_{\alpha} (g *_{\alpha} h) = (f *_{\alpha} g) *_{\alpha} h\quad\text{(associativity)};
    $
    \item 
    $
    f *_{\alpha} (g + h) = (f *_{\alpha} g) + (f *_{\alpha} h) \quad\text{(distributivity)};
    $
     \item 
   $c(f *_{\alpha} g) = (cf) *_{\alpha} g = f *_{\alpha} (cg), \quad c \in \mathbb{R} \quad\text{(scalar property)}.$
\end{enumerate}
\end{theorem}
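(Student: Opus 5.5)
The plan is to reduce everything to the classical convolution through the same change of variables used throughout the paper. Let $\phi_0(t)=\frac{t^{\alpha}}{\alpha}$ and, for a function $f$ on $[0,\infty)$, write $\tilde f=f\circ\phi_0^{-1}$. In (5.1) substitute $u=\phi_0(p)$, so $du=p^{\alpha-1}dp$. Put $x=\phi_0(t)$. Then $(t^{\alpha}-p^{\alpha})^{1/\alpha}=\phi_0^{-1}(x-u)$, and we obtain
\[
(f*_{\alpha}g)(t)=\int_0^{x}\tilde f(u)\,\tilde g(x-u)\,du=(\tilde f*\tilde g)(\phi_0(t)),
\]
where $*$ is the classical convolution $(h*k)(x)=\int_0^x h(u)k(x-u)\,du$. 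Equivalently, $\widetilde{f*_{\alpha}g}=\tilde f*\tilde g$. Since $f\mapsto\tilde f$ is a bijection, linear, and commutes with multiplication by constants, it transports each identity for $*$ to the corresponding identity for $*_{\alpha}$. This is the key lemma, and it is proved first.

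With the lemma in hand, the four properties follow in order.
\begin{enumerate}
\item \emph{Commutativity.} We have $\widetilde{f*_{\alpha}g}=\tilde f*\tilde g=\tilde g*\tilde f=\widetilde{g*_{\alpha}f}$. Applying $\phi_0$ back gives $f*_{\alpha}g=g*_{\alpha}f$. One can also verify this directly in (5.1) with the substitution $q=(t^{\alpha}-p^{\alpha})^{1/\alpha}$. Then $q^{\alpha-1}dq=-p^{\alpha-1}dp$ and the limits $0,t$ are exchanged.
\item \emph{Associativity.} Apply the lemma twice: $\widetilde{f*_{\alpha}(g*_{\alpha}h)}=\tilde f*\widetilde{(g*_{\alpha}h)}=\tilde f*(\tilde g*\tilde h)$. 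Classical associativity turns this into $(\tilde f*\tilde g)*\tilde h=\widetilde{(f*_{\alpha}g)*_{\alpha}h}$.
\item \emph{Distributivity} and \emph{(4) the scalar property.} These are immediate from linearity of the integral in (5.1). No change of variables is needed.
\end{enumerate}

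The only genuine obstacle is associativity, specifically the justification of the classical identity $\tilde f*(\tilde g*\tilde h)=(\tilde f*\tilde g)*\tilde h$. This rests on a Fubini interchange over the triangle $0\le v\le u\le x$. To cover it I will assume that $f,g,h$ are locally integrable (for instance, piecewise continuous, as in Theorem~\ref{thm:4.2}), so that $\tilde f,\tilde g,\tilde h$ are locally integrable on $[0,\infty)$. Note that local integrability of $\tilde f$ is exactly the condition $\int_0^T|f(p)|p^{\alpha-1}dp<\infty$. Under this hypothesis the double integrals converge absolutely and the interchange is legitimate.

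If a self-contained argument is preferred, I would write both sides as iterated integrals in the variables $u=\phi_0(p)$ and $v=\phi_0(q)$. Exchanging the order of integration, followed by the shift $w=v-u$, then shows that both sides equal
\[
\iint_{u+w\le x} \tilde f(u)\,\tilde g(w)\,\tilde h(x-u-w)\,du\,dw .
\]
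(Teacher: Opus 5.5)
Your argument is correct, but it takes a different route from the paper's.

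\textbf{How the paper argues.} The paper works directly in the variable $t$. It gets commutativity from the substitution $y=(t^\alpha-p^\alpha)^{1/\alpha}$, which is your alternative check. It gets distributivity and the scalar rule from linearity. For associativity, it expands $f*_\alpha(g*_\alpha h)$ into a double integral and reads off $(f*_\alpha g)*_\alpha h$. That associativity computation is only a sketch, and its middle line is incorrect as written:
\begin{itemize}
\item it replaces the region $\{(s,p): s^\alpha+p^\alpha\le t^\alpha\}$ by the square $[0,t]^2$, on part of which the argument of $h$ is negative;
\item it reaches the final line without the substitution $u^\alpha=s^\alpha+p^\alpha$, $p^{\alpha-1}\,dp=u^{\alpha-1}\,du$;
\item it also omits the Fubini interchange that this step actually requires.
\end{itemize}

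\textbf{How your route differs.} Your key lemma $\widetilde{f*_\alpha g}=\tilde f*\tilde g$ is exactly the change of variables the paper itself performs later, in proving the convolution theorem. By proving it first, you show that $*_\alpha$ is just the classical convolution conjugated by the bijection $f\mapsto f\circ\phi_0^{-1}$. Commutativity and associativity are then inherited at once. The only analytic point, Fubini on the triangle $u+w\le x$, is isolated and given explicit hypotheses. A correct direct proof in $t$ would need the nonlinear substitution above plus Fubini on a curved region. Your approach replaces that with a short transport argument and makes clear when the identities actually hold.

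\textbf{One small correction to your hypotheses.} Ordinary local integrability of $f$ does not give local integrability of $\tilde f$ when $\alpha<1$, because the weight $p^{\alpha-1}$ is unbounded at $0$. For example, $f(p)=p^{-\alpha}$ is locally integrable, yet $\int_0^T|f(p)|p^{\alpha-1}\,dp=\int_0^T p^{-1}\,dp=\infty$. The hypothesis to assume is the weighted one you state in your next sentence: $\int_0^T|f(p)|p^{\alpha-1}\,dp<\infty$ for every $T$. Piecewise continuity does guarantee this condition.

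Under the weighted condition alone, the convolutions are defined, and the identities hold, only for almost every $t$. For piecewise continuous $f,g,h$ the identities hold for every $t$.
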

\begin{proof}
(1) Start from the definition of the convolution
\[
(g *_{\alpha} f)(t) = \int_{0}^{t} g(p) \; f\!\left(\big( t^{\alpha} - p^{\alpha}\big)^{1/\alpha} \right) p^{\alpha-1} \, dp 
\]
Using substitution
$
y = \big( t^{\alpha} - p^{\alpha}\big)^{1/\alpha} 
$
that is 
$
y^{\alpha} = t^{\alpha} - p^{\alpha}
$, we obtain
\begin{align*}
    (g *_{\alpha} f)(t) 
    &= \int_{0}^{t} f(y) \; g\!\left(\big( t^{\alpha} - y^{\alpha}\big)^{1/\alpha} \right) y^{\alpha-1} \, dy\\
    &=(f *_{\alpha} g)(t).
\end{align*}
(2) Starting with the left-hand side:
\begin{align*}
    \big(f *_\alpha (g *_\alpha h)\big)(t) 
    &= \int_0^t f(s) (g *_\alpha h)\big((t^\alpha - s^\alpha)^{1/\alpha}\big) s^{\alpha-1} \, ds\\
    &= \int_0^t f(s) \left[ \int_0^{(t^\alpha - s^\alpha)^{1/\alpha}} g(p) \, h\Big(\big((t^\alpha - s^\alpha) - p^\alpha\big)^{1/\alpha}\Big) p^{\alpha-1} \, dp \right] s^{\alpha-1} \, ds\\
    &=\int_0^t \int_0^t f(s)g(p)h((t^\alpha-s^\alpha-p^\alpha)^{1/\alpha})s^{\alpha-1}p^{\alpha-1}\,dp\,ds\\
    &=\int_0^t (f*_\alpha g)(u)h((t^\alpha-u^\alpha)^{1/\alpha})u^{\alpha-1}\,du\\
    &=\big((f *_\alpha g) *_\alpha h\big)(t).
    \end{align*}
(3) Follows immediately from the linearity of the integral.
\\(4) Starting with the left-hand side
\begin{align*}
   \big(f*_\alpha (g+h)\big)(t)
   &=\int_0^t f(p)(g+h)\!\left( \big( t^{\alpha} - p^{\alpha}\big)^{1/\alpha} \right) p^{\alpha-1} \, dp\\
   &= (f *_{\alpha} g) + (f *_{\alpha} h).
\end{align*}
Hence, the fractional convolution operator \( *_\alpha \) satisfies the commutativity, associativity, and distributivity properties, and therefore preserves the fundamental algebraic structure of the classical convolution.
\end{proof}
To examine the behavior of the convolution operator, we initially define some useful properties, such as the boundedness in $L^1_\alpha$ and Young’s inequality in Young Inequality in $L^n_\alpha$.
The classical Lebesgue space $L^1(0,\infty)$ consists of all measurable functions $f:(0,\infty)\to \mathbb{R}$ such that
\[
\int_0^\infty |f(t)| \, dt < \infty.
\]
The norm is defined by
\[
\|f\|_{L^1} = \int_0^\infty |f(t)| \, dt.
\]
More generally, for $1 \leq n < \infty$, the space $L^p(0,\infty)$ consists of all measurable functions $f:(0,\infty)\to \mathbb{R}$ satisfying
\[
\int_0^\infty |f(t)|^n \, dt < \infty.
\]
The associated norm is
\[
\|f\|_{L^n} = \left( \int_0^\infty |f(t)|^n \, dt \right)^{1/n}.
\]
Now, for $1 \leq n < \infty$ and $0 < \alpha \leq 1$, we define the weighted space
\[
L^n_\alpha(0,\infty) = \left\{ f:(0,\infty)\to \mathbb{R} \;\Big|\; \int_0^\infty |f(t)|^n t^{\alpha-1} \, dt < \infty \right\}.
\]
The corresponding norm is given by
\[
\|f\|_{L^n_\alpha} = \left( \int_0^\infty |f(t)|^n t^{\alpha-1} \, dt \right)^{1/n}.
\]
Special case $n=1$:
\[
\|f\|_{L^1_\alpha} = \int_0^\infty |f(t)| \, t^{\alpha-1} \, dt.
\]
\begin{theorem}[Boundedness in $L^1_\alpha$]
If $f, g \in L^1_\alpha(0,\infty)$, then
\[
\|f *_\alpha g\|_{L^1_\alpha} \leq \|f\|_{L^1_\alpha} \|g\|_{L^1_\alpha}.
\]
\end{theorem}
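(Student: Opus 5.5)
The plan is to transport the estimate to the classical $L^1(0,\infty)$ setting through the change of variables $\phi_0(t)=t^{\alpha}/\alpha$ used throughout the paper, and there apply the classical Young (Minkowski-type) bound for the Laplace convolution on the half-line. For $f\in L^1_\alpha$ put $\tilde f=f\circ\phi_0^{-1}$. Since $du=t^{\alpha-1}dt$ when $u=\phi_0(t)$, we get $\|f\|_{L^1_\alpha}=\int_0^\infty|\tilde f(u)|\,du=\|\tilde f\|_{L^1}$. So $f\in L^1_\alpha$ exactly when $\tilde f\in L^1(0,\infty)$, and the norms agree.

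Next I would identify $\widetilde{f*_\alpha g}$ with the classical convolution $\tilde f*\tilde g$. Write $p=\phi_0^{-1}(v)$ in (5.1), so that $p^{\alpha-1}dp=dv$. The identity
\[
\bigl(t^\alpha-p^\alpha\bigr)^{1/\alpha}=\phi_0^{-1}\bigl(\phi_0(t)-v\bigr)
\]
then gives $(f*_\alpha g)(t)=\int_0^{\phi_0(t)}\tilde f(v)\,\tilde g(\phi_0(t)-v)\,dv=(\tilde f*\tilde g)(\phi_0(t))$. This is the same computation that underlies the second shifting property.

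Then
\[
\|f*_\alpha g\|_{L^1_\alpha}=\|\tilde f*\tilde g\|_{L^1}\le\int_0^\infty\!\!\int_0^u|\tilde f(v)|\,|\tilde g(u-v)|\,dv\,du .
\]
By Tonelli's theorem, which applies because the integrand is nonnegative and measurable, we may swap the order of integration to $\int_0^\infty|\tilde f(v)|\int_v^\infty|\tilde g(u-v)|\,du\,dv$. The inner integral equals $\|\tilde g\|_{L^1}$ after the shift $w=u-v$. This yields $\|\tilde f\|_{L^1}\|\tilde g\|_{L^1}=\|f\|_{L^1_\alpha}\|g\|_{L^1_\alpha}$.

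One could also stay in the $t$ variable: apply Tonelli to $\int_0^\infty\int_0^t|f(p)||g((t^\alpha-p^\alpha)^{1/\alpha})|p^{\alpha-1}t^{\alpha-1}\,dp\,dt$, swap to $p<t<\infty$, and substitute $y=(t^\alpha-p^\alpha)^{1/\alpha}$, noting $y^{\alpha-1}dy=t^{\alpha-1}dt$.

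The main obstacle is measure-theoretic rather than computational. We must justify the interchange of integrals via Tonelli, which needs joint measurability of $(p,t)\mapsto f(p)\,g((t^\alpha-p^\alpha)^{1/\alpha})$; this holds because the map is a composition of a measurable function with a smooth map on $\{0<p<t\}$. We must also check that $f*_\alpha g$ is defined almost everywhere, which follows from the finiteness of the double integral. Since $\phi_0$ is a $C^1$ diffeomorphism of $(0,\infty)$, the change of variables preserves null sets and measurability, so these points reduce to the classical case.
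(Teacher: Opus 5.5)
Your argument is correct. Your closing alternative (stay in the $t$ variable, swap to $p<t<\infty$, substitute $y=(t^\alpha-p^\alpha)^{1/\alpha}$ with $y^{\alpha-1}dy=t^{\alpha-1}dt$) is exactly the paper's proof. The paper applies the triangle inequality, interchanges the order of integration because the integrand is nonnegative, and reads off $\|g\|_{L^1_\alpha}$ from the inner integral.

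Your main route is a different framing. You show that $f\mapsto f\circ\phi_0^{-1}$ is an isometry of $L^1_\alpha(0,\infty)$ onto $L^1(0,\infty)$ that carries $*_\alpha$ to the classical convolution, $(f*_\alpha g)\circ\phi_0^{-1}=\tilde f*\tilde g$. The theorem is then just the classical $L^1$ Young inequality transported by $\phi_0$. This costs one extra identity, which is the same change of variables the paper uses in its convolution theorem. It buys considerably more. Since $\|f\|_{L^n_\alpha}=\|\tilde f\|_{L^n}$ as well, the $L^n_\alpha$ Young inequality and the commutativity and associativity of $*_\alpha$ all follow from the classical facts with no new computation. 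The measure-theoretic points (joint measurability, a.e.\ existence of $f*_\alpha g$) also reduce to the classical ones, where the relevant map $(v,u)\mapsto(v,u-v)$ is linear. The paper's direct computation is shorter and self-contained, but it leaves Tonelli and measurability implicit.

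One wording refinement for the direct route concerns joint measurability. Justify it by noting that $(p,t)\mapsto\bigl(p,(t^\alpha-p^\alpha)^{1/\alpha}\bigr)$ is a $C^1$ diffeomorphism of $\{0<p<t\}$ onto $(0,\infty)^2$, so it pulls null sets back to null sets. Do not rely on \emph{composition with a smooth map}: a Lebesgue measurable function composed with an arbitrary smooth map need not be measurable.
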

\begin{proof}
The comfortable convolution:
$
(f *_\alpha g)(t) = \int_0^t f(p) \, g\!\left((t^\alpha - p^\alpha)^{1/\alpha}\right) p^{\alpha-1} \, dp.
$
We write the $L^1_\alpha$ norm
\[
\|f *_\alpha g\|_{L^1_\alpha} = \int_0^\infty |(f *_\alpha g)(t)| \, t^{\alpha-1} \, dt.
\]
Substitute the convolution definition
\[
\|f *_\alpha g\|_{L^1_\alpha} = \int_0^\infty \left| \int_0^t f(p) \, g\!\left((t^\alpha - p^\alpha)^{1/\alpha}\right) p^{\alpha-1} \, dp \right| t^{\alpha-1} \, dt.
\]
Now apply the Triangle Inequality
\[
\|f *_\alpha g\|_{L^1_\alpha} \leq \int_0^\infty \int_0^t |f(p)| \, \left| g\!\left((t^\alpha - p^\alpha)^{1/\alpha}\right) \right| p^{\alpha-1} \, dp \, t^{\alpha-1} \, dt.
\]
Since the integrand is nonnegative, we may interchange the order of integration:
\[
\|f *_\alpha g\|_{L^1_\alpha} \leq \int_0^\infty |f(p)| \, p^{\alpha-1} \left( \int_p^\infty \left| g\!\left((t^\alpha - p^\alpha)^{1/\alpha}\right) \right| t^{\alpha-1} \, dt \right) dp.
\]
Let
$ u^\alpha = t^\alpha - p^\alpha, \quad \alpha u^{\alpha-1} \, du = \alpha t^{\alpha-1} \, dt, \quad\Rightarrow\quad u^{\alpha-1} \, du = t^{\alpha-1} \, dt.$
Therefore, the inner integral becomes
$
\int_0^\infty |g(u)| \, u^{\alpha-1} \, du = \|g\|_{L^1_\alpha},
$
Substituting this into the above equation, we obtain
\[
\|f *_\alpha g\|_{L^1_\alpha} \leq \int_0^\infty |f(p)| \, p^{\alpha-1} \, dp \cdot \|g\|_{L^1_\alpha}
= \|f\|_{L^1_\alpha} \|g\|_{L^1_\alpha}.
\]
\end{proof}
\begin{theorem}[Young Inequality in $L^n_\alpha$]
    If \(f \in L^1_\alpha(0,\infty)\) and \(g \in L^n_\alpha(0,\infty)\) where $1 \leq n < \infty$ , then 
    $
    \|f *_\alpha g\|_{L^n_\alpha} \leq \|f\|_{L^1_\alpha} \|g\|_{L^n_\alpha}$ and \(f *_\alpha g \in L^n_\alpha(0,\infty)\).
    
\end{theorem}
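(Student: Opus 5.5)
The plan is to reduce the inequality to the classical Young inequality on $(0,\infty)$ through the change of variables $\phi_0(t)=t^\alpha/\alpha$, which already underlies Theorem~\ref{thm:2.1}. Put $\tilde f=f\circ\phi_0^{-1}$ and $\tilde g=g\circ\phi_0^{-1}$. Since $du=t^{\alpha-1}dt$ when $u=\phi_0(t)$, we get
\[
\|f\|_{L^n_\alpha}^n=\int_0^\infty |f(t)|^n t^{\alpha-1}\,dt=\int_0^\infty |\tilde f(u)|^n\,du=\|\tilde f\|_{L^n}^n .
\]
So $f\mapsto \tilde f$ is an isometry from $L^n_\alpha(0,\infty)$ onto $L^n(0,\infty)$ for every $1\le n<\infty$.

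Next I will identify the conformable convolution with the classical one. In the definition (5.1), substitute $v=\phi_0(p)$, so $p^{\alpha-1}dp=dv$. We have $(t^\alpha-p^\alpha)^{1/\alpha}=\phi_0^{-1}(\phi_0(t)-v)$, and this is exactly the identity already used in the second shifting property. Writing $u=\phi_0(t)$, this gives
\[
(f*_\alpha g)(\phi_0^{-1}(u))=\int_0^{u}\tilde f(v)\,\tilde g(u-v)\,dv=(\tilde f*\tilde g)(u),
\]
where $*$ is the classical one-sided convolution on $(0,\infty)$.

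Combining the two steps,
\[
\|f*_\alpha g\|_{L^n_\alpha}=\|\tilde f*\tilde g\|_{L^n}\le \|\tilde f\|_{L^1}\|\tilde g\|_{L^n}=\|f\|_{L^1_\alpha}\|g\|_{L^n_\alpha}.
\]
The middle inequality is the classical Young inequality, applied to $\tilde f,\tilde g$ extended by zero to $\mathbb R$. Finiteness of the right-hand side then gives $f*_\alpha g\in L^n_\alpha(0,\infty)$. Measurability of $f*_\alpha g$ follows from measurability of $\tilde f*\tilde g$ composed with the homeomorphism $\phi_0$.

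If a self-contained argument is preferred instead of citing classical Young, I would mimic the proof of the boundedness theorem in $L^1_\alpha$ just proved. For $n=1$ the claim is exactly that theorem. For $n>1$, write $|f(p)||g(\cdot)|=|f(p)|^{1/n'}\cdot|f(p)|^{1/n}|g(\cdot)|$ and apply H\"older in $p$ with respect to the measure $p^{\alpha-1}dp$. This gives
\[
|(f*_\alpha g)(t)|^n\le \|f\|_{L^1_\alpha}^{n-1}\int_0^t |f(p)|\,\bigl|g\bigl((t^\alpha-p^\alpha)^{1/\alpha}\bigr)\bigr|^n p^{\alpha-1}\,dp .
\]
Then multiply by $t^{\alpha-1}$, integrate, and use Tonelli together with the substitution $u^\alpha=t^\alpha-p^\alpha$ (so $u^{\alpha-1}du=t^{\alpha-1}dt$), exactly as in the $L^1_\alpha$ case. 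This yields $\|f\|_{L^1_\alpha}^{n}\|g\|_{L^n_\alpha}^n$.

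The main obstacle is the H\"older step: the exponents must be split correctly and the weight $p^{\alpha-1}$ must be kept inside the measure, so that the first factor is exactly $\|f\|_{L^1_\alpha}^{n-1}$. The Tonelli interchange is justified because the integrand is nonnegative.
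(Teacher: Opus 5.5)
Your proof is correct, but it takes a different route from the paper's. The paper stays in the weighted variables. It applies Minkowski's integral inequality to the kernel $f(p)\,g\bigl((t^\alpha-p^\alpha)^{1/\alpha}\bigr)p^{\alpha-1}$, tacitly taken to be zero for $t<p$, which moves the $L^n_\alpha$-norm in $t$ inside the $p$-integral. It then uses the substitution $u^\alpha=t^\alpha-p^\alpha$ to see that the inner norm equals $\|g\|_{L^n_\alpha}$ for every $p$, i.e.\ that the weighted norm is invariant under the conformable shift.

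Your main route instead shows that $\phi_0$ conjugates the whole structure: $f\mapsto f\circ\phi_0^{-1}$ is an isometry of $L^n_\alpha$ onto $L^n$, and $(f*_\alpha g)\circ\phi_0^{-1}=\tilde f*\tilde g$. The statement then becomes literally the classical Young inequality. This is the same philosophy the paper uses for the characterization $\mathcal{L}_0^\alpha(f)=\mathcal{L}(f\circ\phi_0^{-1})$ and for its convolution theorem. It transfers every classical convolution estimate at once, such as Young's inequality with general exponents, at the cost of citing the classical result.

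Your self-contained variant replaces Minkowski's integral inequality with H\"older, using exponents $1/n'$ and $1/n$ against the measure $p^{\alpha-1}dp$, which produces the factor $\|f\|_{L^1_\alpha}^{n-1}$. It then finishes with Tonelli and the same substitution. It also makes explicit that the integral defining $(f*_\alpha g)(t)$ converges absolutely for a.e.\ $t$, which the paper leaves implicit.

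One small precision: composing a Lebesgue measurable function with a mere homeomorphism need not give a measurable function. What you actually use is that $\phi_0^{-1}(u)=(\alpha u)^{1/\alpha}$ is locally Lipschitz and so maps null sets to null sets. Alternatively, simply work with Borel representatives.
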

\textbf{Proof}
The comfortable convolution:
$
(f *_\alpha g)(t) = \int_0^t f(p) \, g\!\left((t^\alpha - p^\alpha)^{1/\alpha}\right) p^{\alpha-1} \, dp.
$
We write the $L^n_\alpha$ norm
\[
\|f *_\alpha g\|_{L^n_\alpha} = \left( \int_0^\infty |(f *_\alpha g)(t)|^n \, t^{\alpha-1}\,dt \right)^{1/n}.
\]
Substitute the convolution definition
\[
\|f *_\alpha g\|_{L^n_\alpha} = \left( \int_0^\infty \left| \int_0^t f(p) \, g\!\left((t^\alpha - p^\alpha)^{1/\alpha}\right) p^{\alpha-1} \, dp \right|^n t^{\alpha-1} \, dt \right)^{1/n}
\]
By Minkowski’s inequality for integrals,
\[
\|f *_\alpha g\|_{L^n_\alpha} \leq \int_0^\infty  \left( |f(p)|^n \, p^{n(\alpha-1)}\int_0^\infty |g((t^\alpha - p^\alpha)^{1/\alpha})|^n \, t^{\alpha-1} dt \right)^{1/n} \, dp.
\]
\[
\|f *_\alpha g\|_{L^n_\alpha} \leq \int_0^\infty |f(p)| \, p^{\alpha-1} \left( \int_0^\infty |g((t^\alpha - p^\alpha)^{1/\alpha})|^n \, t^{\alpha-1} dt \right)^{1/n} \, dp. 
\]
Let
$ u^\alpha = t^\alpha - p^\alpha, \quad \alpha u^{\alpha-1} \, du = \alpha t^{\alpha-1} \, dt, \quad\Rightarrow\quad u^{\alpha-1} \, du = t^{\alpha-1} \, dt.$
Therefore, the inner integral becomes$
\int_0^\infty |g((t^\alpha - p^\alpha)^{1/\alpha})|^n \, t^{\alpha-1} dt = \int_0^\infty |g(u)|^n \, u^{\alpha-1} \, du.
$ Therefore, $
\left( \int_0^\infty |g((t^\alpha - p^\alpha)^{1/\alpha})|^n \, t^{\alpha-1} dt \right)^{1/n} = \|g\|_{L^n_\alpha}
$
Substituting this into the above equation, we obtained
\[
\|f *_\alpha g\|_{L^n_\alpha} \leq \int_0^\infty |f(p)| \, p^{\alpha-1} \, \|g\|_{L^n_\alpha} \, dp= \|g\|_{L^n_\alpha} \int_0^\infty |f(p)| \, p^{\alpha-1} \, dp = \|f\|_{L^1_\alpha} \|g\|_{L^n_\alpha}.
\]
Since $f \in L_{\alpha}^{1}(0,\infty)$ and $g \in L_{\alpha}^{n}(0,\infty)$, their norms are finite. Hence, the above inequality implies that 
\(
\|f *_{\alpha} g\|_{L_{\alpha}^{n}} < \infty .
\) 
Therefore, 
\(
f *_{\alpha} g \in L_{\alpha}^{n}(0,\infty).
\)
\begin{theorem}
     Let \( f, g : [0, \infty) \to \mathbb{R} \) be functions such that their fractional Laplace transforms exist. Then
\[
\mathcal{L}_0^{\alpha}\{f(t)\}(s) \; \mathcal{L}_0^{\alpha}\{g(t)\}(s) = \mathcal{L}_0^{\alpha}\{(f *_{\alpha} g)(t)\}(s), 
\]
where \( *_{\alpha} \) denotes the fractional convolution.
\end{theorem}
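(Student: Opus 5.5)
The plan is to reduce the statement to the classical convolution theorem through the change of variables $\phi_0(t)=\frac{t^\alpha}{\alpha}$ from Theorem~\ref{thm:2.1}. Set $\tilde f = f\circ\phi_0^{-1}$ and $\tilde g = g\circ\phi_0^{-1}$. Then $\mathcal{L}_0^{\alpha}\{f\}(s)=\mathcal{L}\{\tilde f\}(s)$ and $\mathcal{L}_0^{\alpha}\{g\}(s)=\mathcal{L}\{\tilde g\}(s)$. It therefore suffices to show that the conformable convolution is conjugate to the classical one:
\[
(f*_\alpha g)\circ\phi_0^{-1} = \tilde f * \tilde g,
\]
where $*$ is the ordinary Laplace convolution. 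Once this holds, Theorem~\ref{thm:2.1} together with the classical convolution theorem $\mathcal{L}\{\tilde f*\tilde g\}=\mathcal{L}\{\tilde f\}\,\mathcal{L}\{\tilde g\}$ gives the claim.

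To verify the identity, evaluate $(f*_\alpha g)$ at $t=\phi_0^{-1}(u)=(\alpha u)^{1/\alpha}$, so that $t^\alpha=\alpha u$. In the integral (5.1), substitute $v=\phi_0(p)=\frac{p^\alpha}{\alpha}$, which gives $dv=p^{\alpha-1}dp$ and moves the limits from $0,t$ to $0,u$. The argument of $g$ becomes
\[
(t^\alpha-p^\alpha)^{1/\alpha}=(\alpha u-\alpha v)^{1/\alpha}=\phi_0^{-1}(u-v).
\]
Hence
\[
(f*_\alpha g)(\phi_0^{-1}(u))=\int_0^u \tilde f(v)\,\tilde g(u-v)\,dv=(\tilde f*\tilde g)(u),
\]
as required.

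As a cross-check, or as an alternative self-contained route, I would write the product $\mathcal{L}_0^{\alpha}\{f\}\,\mathcal{L}_0^{\alpha}\{g\}$ as a double integral over $(p,q)\in[0,\infty)^2$ with weight $p^{\alpha-1}q^{\alpha-1}$ and exponent $e^{-s(p^\alpha+q^\alpha)/\alpha}$. Substituting $t=(p^\alpha+q^\alpha)^{1/\alpha}$ for fixed $p$ gives $t^{\alpha-1}dt=q^{\alpha-1}dq$. Applying Fubini over the region $0\le p\le t<\infty$ then reproduces $\int_0^\infty e^{-st^\alpha/\alpha}(f*_\alpha g)(t)\,t^{\alpha-1}dt$.

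The main obstacle is justifying the interchange of integrals, or equivalently the hypotheses of the classical convolution theorem. The statement only assumes that the transforms exist, so I would work in the half-plane where both integrals converge absolutely, for instance $\operatorname{Re}(s)$ beyond the abscissas of absolute convergence. Under the piecewise continuity and conformable exponential order assumptions used earlier, $\tilde f$ and $\tilde g$ are of ordinary exponential order. On that half-plane Tonelli applied to $|\cdot|$ legitimizes Fubini, and the identity extends by analyticity wherever the right-hand side is defined.
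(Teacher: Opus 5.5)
Your proposal is correct and follows the same route as the paper. Both arguments pull back along $\phi_0(t)=t^\alpha/\alpha$, apply the classical convolution theorem to $\tilde f=f\circ\phi_0^{-1}$ and $\tilde g=g\circ\phi_0^{-1}$, and identify $\tilde f*\tilde g$ with $(f*_\alpha g)\circ\phi_0^{-1}$ via the substitution $y=p^\alpha/\alpha$.

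Your version is slightly cleaner: you isolate that substitution as a conjugation identity and then apply the change-of-variables characterization, whereas the paper's final display drops the weight $t^{\alpha-1}$. Your point that the interchange needs absolute convergence, e.g.\ from exponential order, also states a hypothesis the paper leaves implicit.
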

\begin{proof}
From the relationship between the conformable and classical Laplace transforms, we write  
\[ \mathcal{L}_0^{\alpha} \{ f(t) \} = \mathcal{L}(f \circ \phi_0^{-1})(s) \quad\text{and}\quad\mathcal{L}_0^{\alpha} \{ g(t) \} = \mathcal{L}(g \circ \phi_0^{-1})(s) \quad\text{where}\quad \phi_0(t) = \frac{t^\alpha}{\alpha}.
\]
Let
\[
\tilde{f}(u) = f(\phi_0^{-1}(u)) \quad\text{and} \quad \tilde{g}(u) = g(\phi_0^{-1}(u)),\quad  u = \phi_0(t).
\]
Then
\[
\mathcal{L}_0^{\alpha} \{ f(t) \}(s) = \mathcal{L} \{ \tilde{f}(u) \}, \quad \mathcal{L}_0^{\alpha} \{ g(t) \}(s) = \mathcal{L} \{ \tilde{g}(u) \}
\]
Therefore, 
\[
\mathcal{L}_0^{\alpha}\{f(t)\}(s) \; \mathcal{L}_0^{\alpha}\{g(t)\}(s) = \mathcal{L}\{\tilde{f}(u)\} (s)\; \mathcal{L}\{\tilde{g}(u)\}(s).
\]
Now using the classical convolution theorem,
\begin{align*}
   \mathcal{L}\{\tilde{f}(u)\}(s) \; \mathcal{L}\{\tilde{g}(u)\}(s)
   &= \mathcal{L}\{(\tilde{f} * \tilde{g})(u)\}(s) \\
   &= \int_{0}^{\infty} e^{-su} \left(\int_0^u \tilde{f}(y) \, \tilde{g}(u - y) \, dy \right) du.
\end{align*}
Since
\(
(\tilde{f} * \tilde{g})(u) = \int_0^u \tilde{f}(y) \, \tilde{g}(u - y) \, dy .
\)
Substituting,
\(
\tilde{f}(u) = f\!\left((\alpha u)^{1/\alpha}\right)
\), the inner convolution becomes
\[
\mathcal{L}\{(\tilde{f} * \tilde{g})(u)\}(s) = \int_{0}^{\infty} e^{-su}\left[ \int_0^u f\!\left((\alpha y)^{1/\alpha}\right)\;g\!\left((\alpha(u- y))^{1/\alpha}\right)dy \right] du 
\]
Now introduce the change of variables
\(
u = \frac{ t^{\alpha}}{\alpha}, y = \frac{ p^{\alpha}}{\alpha}\)  implis \( dy = p^{\alpha-1} dp. 
\)
Moreover,
\(
u - y = \frac{t^{\alpha} - p^{\alpha}}{\alpha}.
\)
Substituting this into the classical convolution formula and transforming back to the variable \(t\), we obtain the fractional convolution:
\begin{align*}
\mathcal{L}\{(\tilde{f} * \tilde{g})(u)\}(s)
& =\int_{0}^{\infty} e^{-s\frac{ t^{\alpha}}{\alpha}}
\left[ \int_{0}^{t} f(p) \; g\!\left( \big( t^{\alpha} - p^{\alpha}\big)^{1/\alpha} \right)
p^{\alpha-1} \, dp \right] dt \\
&=\mathcal{L}_0^{\alpha}\{(f *_{\alpha} g)(t)\}(s).
\end{align*}
\end{proof}
\section{Applications to Fractional Initial Boundary Value Problems}
With the main theoretical findings of the conformable fractional Laplace transform having been made, we now turn our attention to its practical implementation. Specifically, the transform method is very helpful in solving the fractional initial-boundary value problem that arises in transport and diffusion phenomena. The examples given below demonstrate how the developed framework can be utilized to obtain analytical solutions.
\begin{example}[Conformable Fractional First-Order IBVP]Consider the classical first-order initial-boundary value problem
$ 
u_t + x u_x = x, \quad x > 0, \; t > 0, 
$
with the initial and boundary conditions 
$ u(x,0) = 0 \quad \text{for } x > 0,\quad u(0,t) = 0 \quad \text{for } t > 0.
$
This classical formulation is extended to the conformable fractional setting by replacing the first-order time derivative with the conformable fractional derivative of order \(0 < \alpha \le 1\).
\[
    T_t^\alpha u(x,t) + x u_x(x,t) = x, \quad x > 0, \; t > 0 
\]
with the initial and boundary conditions
\[
    u(x,0) = 0, \qquad u(0,t) = 0.
\]
Applying conformable Laplace transform $\mathcal{L}_{\alpha}$ to equation
\[
   \mathcal{L}_{\alpha}\{T_t^\alpha u(x,t)\} +  \mathcal{L}_{\alpha}\{x u_x (x,t)\}= \mathcal{L}_{\alpha}\{x\} 
\]
\[
   sU(x,s) - u(x,0) + x \frac{dU(x,s)}{dx} = \frac{x}{s} 
\]
Since \( u(x,0) = 0 \);
\[
    s U(x,s) + x \frac{dU}{dx} = \frac{x}{s} \quad\Rightarrow \quad
xU_x + sU = \frac{x}{s} \quad\Rightarrow \quad   U_x + \frac{s}{x} U = \frac{1}{s}
\]
This is a first-order linear ODE of the form
$
U' + P(x)U = Q(x)
$
with
\[
  P(x) = \frac{s}{x}, \quad Q(x) = \frac{1}{s}.  
\]
Integrating factor $ x^{s}$,  the solution of this transformed equation is
\[
    U(x,s) = \frac{x}{s(s+1)} + C x^{-s}
\]
where \( C \) is a constant of integration. Since \( U(0,s) = 0 \), \( C = 0 \) for a bounded solution. Consequently
\[
   U(x,s) = \frac{x}{s(s+1)}= x\left( \frac{1}{s} - \frac{1}{s+1} \right)
\]
Using the conformable Laplace transform:
\[
    \mathcal{L}_{\alpha}^{-1}\!\left( \frac{1}{s} \right) = 1, \quad
\mathcal{L}_{\alpha}^{-1}\!\left( \frac{1}{s+1} \right) = e^{-t^{\alpha}/\alpha}
\]
Thus,
\[
    u(x,t) = x \left( 1 - e^{-t^{\alpha}/\alpha} \right).
\]
\end{example}
\begin{example}[Time-Fractional Diffusion Equation in a Semi-Infinite Medium] Consider the conformable diffusion equation in a semi-infinite medium
\begin{equation}
 T_t^\alpha u(x,t) = \kappa \, u_{xx}(x,t), \quad x > 0, \; t > 0, \; 0 < \alpha \leq 1  
 \label{eq:6.1}
\end{equation}
subject to the initial and boundary conditions
\begin{equation}
   u(x,0) = 0, \quad x > 0,  
   \label{eq:6.2}
\end{equation}
\begin{equation}
    u(0,t) = f(t), \quad t > 0,
    \label{eq:6.3}
\end{equation}
\begin{equation}
   u(x,t) \to 0 \quad \text{as} \quad x \to \infty.  
   \label{eq:6.4}
\end{equation}
Here \( T_t^\alpha \) denotes the conformable fractional derivative of order \( \alpha \) with respect to time.\\
Applying conformable Laplace transform $\mathcal{L}_{\alpha}$ to equation \eqref{eq:6.1}
\[
  \mathcal{L}_{\alpha}\{T_t^\alpha u\} = \mathcal{L}_{\alpha}\{k\frac{\partial^2 u}{\partial x^2}\}  
\]
we obtain
\[
    sU(x,s) - u(x,0)=k\frac{\partial^2 U(x,s)}{\partial x^2}
\]
Using the initial condition \eqref{eq:6.2}, we obtain 
$sU(x,s) = \kappa U_{xx}(x,s).$
Rewriting,
\begin{equation}
  U_{xx}(x,s) - \frac{s}{\kappa} U(x,s) = 0. 
  \label{eq:6.5}
\end{equation}
Equation \eqref{eq:6.5} is a second-order ordinary differential equation in 
$x.$
The general solution of \eqref{eq:6.5}  is
\[
    U(x,s) = A(s) e^{-x\sqrt{s/\kappa}} + B(s) e^{x\sqrt{s/\kappa}}. 
\]
The boundedness condition \eqref{eq:6.4} implies \( B(s) = 0 \).
Using the boundary condition \eqref{eq:6.3},
\[
    U(0,s) = \mathcal{L}_{\alpha}\{f(t)\}(s) = F_{\alpha}(s),
\]
we obtain
\[
    U(x,s) = F_{\alpha}(s) \, e^{-x\sqrt{s/\kappa}}. 
\]
Using the classical Laplace transform,
\[
    \mathcal{L}^{-1}\!\left\{ \frac{e^{-a\sqrt{s}}}{\sqrt{s}} \right\}
= \frac{1}{\sqrt{\pi u}} \exp\!\left( -\frac{a^2}{4u} \right). 
\]
With \( a = x/\sqrt{\kappa} \) and applying the conformable inversion theorem, we obtain
\begin{equation}
    g(t) = \frac{x}{2\sqrt{\pi \kappa}} \left( \frac{t^{\alpha}}{\alpha} \right)^{-3/2}
\exp\!\left( -\frac{x^2}{4\kappa (t^{\alpha}/\alpha)} \right).
\label{eq:6.6}
\end{equation}
Now, by using fractional convolution
\[
   \mathcal{L}_{\alpha}^{-1}\{U(x,s)\}=\mathcal{L}_{\alpha}^{-1}\{F_{\alpha}(s) G(s)\} = (f *_{\alpha} g)(t) 
\]
\begin{equation}
   u(x,t)=(f *_{\alpha} g)(t) = \int_{0}^{t} f(p) \, g\!\left( (t^{\alpha} - p^{\alpha})^{1/\alpha} \right) p^{\alpha-1} \, dp 
   \label{eq:6.7}
\end{equation}
Substituting \( g(t) \) from \eqref{eq:6.6} into the fractional convolution \eqref{eq:6.7}, we obtain a solution of the problem \eqref{eq:6.1}--\eqref{eq:6.4} is
\[
    u(x,t) = \frac{x\alpha^{3/2}}{2\sqrt{\pi \kappa}} \int_{0}^{t} f(p) \, (t^{\alpha} - p^{\alpha})^{-3/2}
\exp\!\left( -\frac{\alpha x^{2}}{4\kappa (t^{\alpha} - p^{\alpha})} \right) p^{\alpha-1} \, dp.
\]
which is, by putting 
$\lambda = \frac{x \sqrt{\alpha}}{2 \sqrt{\kappa (t^{\alpha} - p^{\alpha})}}$ and $p^{\alpha-1} dp = \frac{x^{2}}{2 \kappa} \lambda^{-3} d\lambda$,
\[
    u(x,t) = \frac{2}{\sqrt{\pi}} \int_{\frac{x\sqrt{\alpha}}{2\sqrt{\kappa t^{\alpha}}}}^{\infty} f\!\left( \left( t^{\alpha} - \frac{\alpha x^{2}}{4\kappa \lambda^{2}} \right)^{1/\alpha} \right) e^{-\lambda^{2}} \, d\lambda.
\]
This is the formal solution of the problem.
\end{example}
\begin{example}[Time-Fractional Diffusion Equation in finite Medium]
 Consider the one-dimensional conformable time-fractional diffusion equation in a finite medium:
\begin{equation}
T_\alpha u(x,t) = \kappa \frac{\partial^2 u(x,t)}{\partial x^2}, \quad 0 < x < a, \; t > 0,
\label{eq:6.8}
\end{equation}
where \(0 < \alpha \leq 1\), \(\kappa > 0\) is the diffusion coefficient, and \(T_\alpha\) denotes the conformable fractional derivative of order \(\alpha\) .
The system is subject to the initial condition
\begin{equation}
u(x,0) = 0, \quad 0 < x < a,
\end{equation}
and the mixed boundary conditions
\begin{equation}
    u(0,t) = U, \quad t > 0, 
    \label{eq:6.10}
\end{equation}
\begin{equation}
    \frac{\partial u}{\partial x}(a,t) = 0, \quad t > 0,
\label{eq:6.11}
\end{equation}
where \(U\) is a prescribed constant.\\
Apply the Conformable Laplace Transform in \(t\) of the equation \eqref{eq:6.8}
\[
    \mathcal{L}_\alpha\{T_t^\alpha u\} = s\bar{u}(x,s) - u(x,0)
\]
Since \(u(x,0) = 0\),
\(
s\bar{u}(x,s) = \kappa \frac{d^2\bar{u}}{dx^2}
\).
Hence,
\(
\frac{d^2\bar{u}}{dx^2} - \frac{s}{\kappa}\bar{u} = 0
\). This is the same spatial ODE as the classical case.
So general solution
\[
    \bar{u}(x,s) = A \cosh\left(x \sqrt{\frac{s}{\kappa}}\right) + B \sinh\left(x \sqrt{\frac{s}{\kappa}}\right)
\]
Apply Boundary Conditions \eqref{eq:6.10}, \(u(0,t) = U\).
Taking the conformable Laplace transform:
\(\bar{u}(0,s) = \frac{U}{s}\), thus
\(A = \frac{U}{s}\).
Apply Boundary Conditions \eqref{eq:6.11}, \(u_x(a,t) = 0\).\\
Differentiate:
\[
    \bar{u}_x = A \sqrt{\frac{s}{\kappa}} \sinh\left(x \sqrt{\frac{s}{\kappa}}\right) + B \sqrt{\frac{s}{\kappa}} \cosh\left(x \sqrt{\frac{s}{\kappa}}\right)
\]
At \(x=a:\)
\[
    A \sinh\left(a \sqrt{\frac{s}{\kappa}}\right) + B \cosh\left(a \sqrt{\frac{s}{\kappa}}\right) = 0
\]
Solve for \(B\)
\[
    B = -A \tanh\left(a \sqrt{\frac{s}{\kappa}}\right)
\]
Substitute \(A = \frac{U}{s}\).
After simplification:
\[
  \bar{u}(x,s) = \frac{U}{s} \frac{\cosh\left((a-x) \sqrt{\frac{s}{\kappa}}\right)}{\cosh\left(a \sqrt{\frac{s}{\kappa}}\right)}  
\]
The Cauchy Residue Theorem can carry out the inversion 
\[
   u(x,t) = \sum_k \text{Res}\left(e^{s\frac{t^\alpha}{\alpha}} \bar{u}(x,s), s = z_k\right) 
\]
Poles occur when,
\(
\cosh\left(a \sqrt{\frac{s}{\kappa}}\right) = 0
\), this gives
\(
\sqrt{\frac{s}{\kappa}} = i \frac{(2n-1)\pi}{2a}
\).
So
\(
s_n = -\kappa \left(\frac{(2n-1)\pi}{2a}\right)^2
\)
Using the same residue computation as the classical case to obtain
\[
    u(x,t) = U \left[ 1 + \frac{4}{\pi} \sum_{n=1}^{\infty} \frac{(-1)^n}{2n-1} \cos\left( \frac{(2n-1)\pi}{2a}(a-x) \right) \exp\left( -\kappa \left( \frac{(2n-1)\pi}{2a} \right)^{2\alpha} t^\alpha \right) \right]
\]
\[
    u(x,t) = U \left[ 1 - \frac{4}{\pi} \sum_{n=1}^{\infty} \frac{1}{2n-1} \sin\left(\frac{(2n-1)\pi x}{2a}\right) \exp\left(-\kappa \left(\frac{(2n-1)\pi}{2a}\right)^2 \frac{t^{\alpha}}{\alpha}\right) \right].   
\]
Similar to the classical diffusion equation, the above solution may also be obtained by the method of separation of variables in the conformable fractional framework.
\end{example}
\begin{example}
    Consider a finite domain problem with Dirichlet boundary conditions
\begin{equation}
    T_t^\alpha u(x,t) = \frac{\partial^2 u(x,t)}{\partial x^2}, \quad 0 < x < \pi,\; t > 0, 
    \label{eq:6.12}
    \end{equation}
with initial and boundary conditions
\begin{equation}
    u(x,0) = \sin x, \quad u(0,t) = u(\pi,t) = 0
    \label{eq:6.13}
\end{equation}
Applying the conformable fractional Laplace transform on the equation \eqref{eq:6.12} 
\[
    \mathcal{L}_{\alpha}\{T_t^\alpha u\} = \mathcal{L}_{\alpha}\{\frac{\partial^2 u}{\partial x^2}\}
\]
we obtain
\[
    sU(x,s) - u(x,0)=\frac{\partial^2 U(x,s)}{\partial x^2}
\]
Using \eqref{eq:6.13},
\[
    sU(x,s) - \sin x = \frac{\partial^2 U(x,s)}{\partial x^2} 
\]
\begin{equation}
    \frac{\partial^2 U}{\partial x^2}- sU = - \sin x
    \label{eq:6.14}
\end{equation}
This is a linear second-order equation in x. \\
Assume
\begin{equation}
    U(x,s) = A(s)\sin x,
    \label{eq:6.15}
\end{equation}
Since
\(
U(0,s) = U(\pi,s) = 0
\).
Differentiating equation \eqref{eq:6.15} twice, we obtain
\begin{equation}
    \frac{\partial^2 U}{\partial x^2}= - A(s)\sin x
    \label{eq:6.16}
\end{equation}
Putting \eqref{eq:6.15} and \eqref{eq:6.16} in \eqref{eq:6.14}, we obtain
\[
- A(s)\sin x- A(s)\sin x = - \sin x, \quad \Rightarrow \quad A(s)\sin x(s+1) = \sin x
\]
\begin{equation}
    A(s) = \frac{1}{s + 1}
    \label{eq:6.17}
\end{equation}
Putting \eqref{eq:6.17} in \eqref{eq:6.15}, we get 
\[
    U(x,s) = \frac{\sin x}{s + 1}
\]
Now apply the conformable fractional inversion theorem 
\[
    \mathcal{L}_{\alpha}^{-1}\{U(x,s)\}= \mathcal{L}_{\alpha}^{-1}\!\left\{\frac{\sin x}{s+1}\right\}
\]
\[
    u(x,t) = \sin x \; \mathcal{L}^{-1}_{\alpha}\!\left\{\frac{1}{s+1}\right\}
\]
\[
    u(x,t) = \sin x \; e^{-\frac{t^{\alpha}}{\alpha}}.
\]
This provides an explicit solution of the conformable time-fractional diffusion equation with Dirichlet boundary conditions.
\end{example}
\textbf{Remark:}
(1)  We can use any other diffeomorphism from $[0,\infty)$ to $[0,\infty)$ instead of 
$\phi_a(t)=\frac{(t-a)^\alpha}{\alpha}$ to widen the class of functions in applications.\\
(2) Also, if we use the Henstock--Kurzweil (HK) integration instead of the Lebesgue 
integration in the definition of the conformable Laplace transform, then it becomes 
applicable to more oscillatory functions.

\section{Conclusion}
This paper presents a systematic study of the Conformable Fractional Laplace Transform (CFLT) and its application to fractional diffusion equations. The fundamental properties of the transform, including its relationship with the classical Laplace transform, have been examined in detail. In particular, the fractional convolution operator associated with the conformable Laplace transform was analyzed, and important properties such as commutativity, associativity, and distributivity were established. These results confirm that the conformable convolution maintains the essential algebraic structure of the classical convolution framework.
Furthermore, the boundedness of the fractional convolution operator in the weighted space $L_{\alpha}^{1}$ and the corresponding Young-type inequality in $L_{\alpha}^{n}$ were demonstrated. These theoretical results provide a rigorous functional-analytic foundation for applying the conformable Laplace transform in solving fractional differential equations.\\
The effectiveness of the proposed framework was illustrated through several applications to fractional initial-boundary value problems and diffusion equations in both semi-infinite and finite domains. By applying the conformable Laplace transform method, explicit analytical solutions were obtained for these problems. The results show that the conformable fractional model preserves the structural simplicity of the classical Laplace transform while extending its applicability to fractional-order systems.\\
Overall, the study confirms that the conformable fractional Laplace transform is a powerful analytical tool for solving fractional diffusion equations and related problems. The developed methodology not only simplifies the solution process but also provides a consistent bridge between classical and fractional models. This framework may be further extended to investigate more complex fractional partial differential equations and other applied problems arising in physics, engineering, and applied mathematics.
\bibliographystyle{plain}

\end{document}